\documentclass[12pt]{amsart}
\usepackage{amsmath,amsthm,amsrefs}
\usepackage{amsbsy,amsfonts,amssymb,mathtools}
\usepackage{amsaddr, ytableau,kbordermatrix,mathtools}

\addtolength{\voffset}{-1cm}
\addtolength{\textheight}{2cm}
\addtolength{\hoffset}{-1cm}
\addtolength{\textwidth}{2cm}

\def\ds{\displaystyle}

\def\spo{spo(2m,n)}
\def\ov{\overline}
\def\c{\circ}

\def\ov{\overline}

\theoremstyle{plain}
\newtheorem{thm}{Theorem}[section]
\newtheorem{lem}[thm]{Lemma}

\newtheorem{cor}[thm]{Corollary}

\theoremstyle{definition}

\newtheorem{exa}[thm]{Example}

\newdimen\Squaresize \Squaresize=14pt
\newdimen\Thickness \Thickness=0.4pt

\def\Square#1{\hbox{\vrule width \Thickness
   \vbox to \Squaresize{\hrule height \Thickness\vss
      \hbox to \Squaresize{\hss#1\hss}
   \vss\hrule height\Thickness}
\unskip\vrule width \Thickness} \kern-\Thickness}

\def\Vsquare#1{\vbox{\Square{$#1$}}\kern-\Thickness}

\def\younglambda#1{
\vbox{\smallskip\offinterlineskip \halign{&\Vsquare{##}\cr #1}}}
\def\vy#1{\hskip2pt\vcenter{\younglambda{#1}}\hskip2pt}

\newcommand{\shiftbar}[2]{\ensuremath \raisebox{#1cm}{$\leftarrow\overline{#2}$}}
\newcommand{\shiftnobar}[2]{\ensuremath \raisebox{#1cm}{$\leftarrow{#2}$}}



%
%


\title[Orthosymplectic Cauchy identities]%
{Orthosymplectic Cauchy identities}

\author{Aalekh Patel, Harsh Patel, Anna Stokke}
\address{University of Winnipeg \\
Department of Mathematics and Statistics \\
Winnipeg, Manitoba \\
Canada  R3B 2E9}

\email{\tt a.stokke@uwinnipeg.ca}
\thanks{This research was supported by NSERC grant RGPIN-2018-05877.}

\begin{document}

\begin{abstract}  We give bijective proofs of orthosymplectic analogues of the Cauchy identity and dual Cauchy identity for orthosymplectic Schur functions.   To do so, we present two insertion algorithms; these are
orthosymplectic versions of  Berele's symplectic insertion algorithms, which were used by Sundaram to give bijective proofs of Cauchy identities for symplectic Schur functions.   \end{abstract}

\maketitle

\section{Introduction}

The Robinson-Schensted-Knuth (RSK) correspondence, which gives a bijection between generalized permutations and pairs of semistandard tableaux $(P,Q)$ of the same shape, yields a direct proof of the Cauchy identity for Schur functions \cite{knuth}.  Using an insertion algorithm for symplectic tableaux developed by Berele \cite{berele}, Sundaram proved a Cauchy identity for the characters of irreducible representations of symplectic groups in \cite{sundaram}.

Benkart, Shader and Ram worked with representations of orthosymplectic Lie superalgebras $spo(2m,n)$ in \cite{benkart} where they introduced $spo$-tableaux as hybrids of symplectic tableaux and semistandard tableaux in order to give combinatorial descriptions of characters of $\spo$-representations. 
They also gave algebraic proofs of various identities for orthosymplectic characters, including Jacobi-Trudi-type formulae and an analogue of the Cauchy identity.

In \cite{stokkevisentin} an orthosymplectic Jacobi-Trudi identity is proved combinatorially using lattice path arguments.  In \cite{stokkeosp}, the third author proved a Pieri rule, which describes the integer coefficients of the orthosymplectic characters that appear in the expansion of the product of two orthosymplectic characters, one of which is associated to a one-row tableau.  In the current paper, we give bijective proofs of orthosymplectic analogues of the Cauchy identity and dual Cauchy identity.

After establishing preliminaries in Section \ref{sec:prelim}, we recall the jeu de taquin algorithm for $spo$-tableaux from \cite{benkart} and then define an $spo$-insertion algorithm in Section \ref{sec:jdt}.  An $spo$-insertion algorithm is also given in \cite{benkart} and used in \cite{stokkeosp}; the one we describe here is slightly different, but both algorithms 
 are hybrids of regular RSK-insertion and the Berele insertion algorithm \cite{berele, sundaram}.  In Section \ref{sec:cauchy1}, we introduce the $spo$-correspondence,  which yields a bijection between a set $\mathcal{A}$ of two-line arrays defined in Section \ref{sec:prelim} and triples that consist of an $spo$-tableau, a semistandard tableau of the same shape and Burge two-line arrays.  This $spo$-correspondence is the main tool that we use to establish an orthosymplectic Cauchy identity.  In Section \ref{sec:dualortho} we give a dual $spo$-correspondence,  and use it to give a direct proof of an orthosymplectic analogue of the dual Cauchy identity.

\section{Preliminaries}\label{sec:prelim}
A {\em partition} of a positive integer $N$ is a $k$-tuple of  positive integers
$\lambda=(\lambda_1,\ldots,\lambda_k)$ with $\lambda_1 \geq \lambda_2 \geq \cdots \geq \lambda_k$ and $\vert \lambda \vert =\sum_{i=1}^k \lambda_i=N$. The {\em Young diagram} of shape $\lambda$ contains $N$
boxes in $k$ left-justified rows with $\lambda_i$ boxes in the $i$th row.  The {\em length} of the Young diagram of shape $\lambda$ is the number of rows in $\lambda$, denoted $\ell(\lambda)$.  The {\em conjugate} or {\em transpose} of $\lambda$ is the partition
$\lambda^t=(\lambda_1^t,\lambda_2^t,\ldots,\lambda_s^t)$ where
$\lambda_i^t$ is the number of boxes in the $i$th column of the
Young diagram of shape $\lambda$.  A partition is said to be {\em even} if the number of boxes in each row is even.

A $\lambda$-tableau  is obtained by filling the boxes of the Young diagram of shape $\lambda$ with entries from a set of positive integers
$\{1,2, \ldots, n\}$. A
$\lambda$-tableau is {\em semistandard} if the entries in the rows are weakly increasing from left to right and the entries in the
columns are strictly increasing from top to bottom.  Given a $\lambda$-tableau $T$, we will denote $sh(T)=\lambda$.

The \textit{weight} of a $\lambda$-tableau $T$ is a monomial in the variables $X = \{x_1, \ldots, x_n\}$ defined as $\displaystyle wt(T) = \prod_{i=1}^{n} x_i^{\alpha_i} $ where $\alpha_i $ denotes the number of appearances of the entry $i$ in $T$.
The \textit{Schur function} corresponding to $\lambda$ is defined as $\displaystyle s_\lambda(X) = \sum_{T}^{} wt(T) $, where  the sum runs over the semistandard $\lambda$-tableaux with entries in $\{1,\ldots,n\}$.

Given partitions $\lambda$ and $\mu$, we say that $\mu \subseteq \lambda $ if $\mu_i \le \lambda_i $, for $i \ge 1 $. A \textit{skew diagram} of shape $\lambda/\mu $ is obtained by removing the Young diagram of shape $\mu$ from the top-left corner of the Young diagram $\lambda$. A \textit{skew tableau} is obtained by filling a skew diagram with the entries from a set $\{1, 2, \ldots, n\}$. A \textit{semistandard skew tableau} is a skew tableau in which the entries in the rows weakly increase from left to right and the entries in the columns strictly increase from top to bottom. The weight of a skew tableau and \textit{skew Schur function} are defined similar to the above.

Let $B_0=\{1,\ov{1},2,\ov{2},\dots,m,\ov{m}\}$ with the ordering $1< \ov{1} < 2 < \ov{2} < \cdots < m < \ov{m}$
and suppose that $\lambda$ is a partition with $\ell(\lambda)\leq m$.   A {\em semistandard symplectic $\lambda$-tableau}   has entries in $B_0$, is semistandard with respect to the above ordering, and satisfies the {\em symplectic condition}, which requires that the entries
in the $i$th row of $T$ are greater than or equal to $i$, for each
$1\leq i \leq m$.  If an entry $j$ appears in row $i$ where $j<i$, we refer to this as a {\em symplectic violation}.

The weight
of a symplectic $\lambda$-tableau $T$   is a monomial in the entries $\overline{X}=\{x_1,x_1^{-1},\ldots,x_m,x_m^{-1}\}$ and is given by
$\mathrm{wt}(T)=\prod_{i=1}^m x_i^{a_i-a_{\ov{i}}} $  where $a_i$ (respectively $a_{\overline{i}}$) is equal to the number of entries equal to $i$ (respectively $\overline{i}$)  in $T$.  The {\em symplectic Schur function} is defined as
$$sp_{\lambda}(\overline{X})=\sum_{T} \mbox{wt}(T),$$ where the
sum runs over the semistandard symplectic $\lambda$-tableaux $T$ with entries in $\overline{X}$. 



Let $B_1=\{1^\circ,2^\circ,\ldots,n^\circ\}$ and order $B_0 \cup B_1$ as follows:
\[ 1 < \ov1 < 2 < \ov2 < \cdots < m < \ov{m} < 1^\circ < 2^\circ < \cdots < n^\circ. \]

An {\em $spo(2m,n)$-tableau} $T$ of shape $\lambda$ (or an $spo$-tableau) is a filling of the Young diagram of shape $\lambda$ with entries from $B_0 \cup B_1$ such that the portion $S$ of $T$ that contains entries only from $B_0$ is a semistandard symplectic tableau.  Furthermore, the skew tableau formed by removing $S$ from $T$, which contains only entries from $B_1$, is strictly increasing across the rows from left to right and weakly increasing down columns from top to bottom.

Let $\mathcal{T}=\{t_1,\ldots,t_n\}$ and $Z=\overline{X} \ \cup \ \mathcal{T}$.
The {\em orthosymplectic character} corresponding to $\lambda$ is given by
\[ spo_{\lambda}(Z)=\sum_{\substack{\mu \subseteq \lambda \\ \ell(\mu) \leq m}} sp_{\mu}(\overline{X})s_{\lambda^t /\mu^t}(\mathcal{T}), \]
where $sp_\mu(\overline{X})$ is a symplectic Schur function in $\overline{X}$ and $s_{\lambda^t/ \mu^t}(Y)$ is a skew Schur polynomial in $\mathcal{T}$.

To define an orthosymplectic character in terms of $spo$-tableaux, define the weight of an $spo$-tableau $T$ by replacing each entry $i \in \{1,\ldots,m\}$ in $T$ with $x_i$,
each $\overline{i} \in \{\ov{1},\ldots,\ov{m}\}$ in $T$ with $x_i^{-1}$ and each $i^\circ \in \{1^\c, \ldots,n^\c\}$ in $T$ with $t_i$ and
let  $\mbox{wt}(T)$ be the product of these variables.
We have $spo_{\lambda}(Z)=\sum_T \mbox{wt}(T)$,
where the sum is over the $spo$-tableaux of shape $\lambda$ with entries from $Z$ \cite[Theorem 5.1]{benkart}.


\begin{exa}Let $\lambda = (2)$, $m = 1$ and $n = 2$. Then $\overline{X} = \{x_1, x^{-1}_1\}$, $\mathcal{T}= \{t_1, t_2\}$.   The spo-tableaux are 
$$\vy{1 & 1\cr}, \ \vy{1 & \ov{1} \cr}, \ \vy{1 & 1^\circ \cr}, \ \vy{1 & 2^\circ \cr}, \ \vy{\ov{1} & \ov{1} \cr}, \ \vy{\ov{1} & 1^\circ\cr}, \ \vy{\ov{1} & 2^\circ\cr}, \ \vy{1^\circ & 2^\circ \cr},$$
and
\begin{align*}
spo_\lambda(Z) = x_1^2 + 1 + x_1t_1 + x_1t_2 + x_1^{-2} + x_1^{-1}t_1 + x_1^{-1}t_2 + t_1t_2
.\end{align*}
\end{exa}

\bigskip

 Let $q$ be a positive integer and consider the set of two-line arrays  $$ \pi = 
\begin{pmatrix}
a_1 & a_2  & \dots & a_t\\
b_1 & b_2 & \dots & b_t
\end{pmatrix}$$
such that $a_i \in \{1,\ldots,q\}$ and $b_i \in B_0 \cup B_1$.
Let $\mathcal{A}$ denote the set of such arrays that satisfy the following properties:
\begin{enumerate}
\item  $a_i \leq a_{i+1}$  and if $a_i =a_{i+1}$ then $b_i \leq b_{i+1}$;
\item any column $\ds \binom{i}{j}$ with $j \in B_1$ occurs at most once in $\pi$.
\end{enumerate}

\noindent Define $\mathcal{A}^*$ to be the set of arrays that satisfy:
\begin{enumerate}
\item $a_i \leq a_{i+1}$  and if $a_i =a_{i+1}$ then $b_i \geq b_{i+1}$;
\item any column $\ds \binom{i}{j}$ with $j \in B_0$ occurs at most once in $\pi$.
\end{enumerate}

\noindent A {\em Burge two-line array} $\pi$ has $a_i \in \{1,\ldots,q\}$, $b_i \in \{1,\ldots,q-1\}$ and satisfies
\begin{enumerate}
\item  $a_i \leq a_{i+1}$  and if $a_i =a_{i+1}$ then $b_i \leq b_{i+1}$;
\item $a_i > b_i$ for $1 \leq i \leq t$.
\end{enumerate}

\noindent A {\em dual Burge two-line array} $\pi$ has both $a_i,b_i \in \{1,\ldots,q\}$ and satisfies
\begin{enumerate}
\item  $a_i \leq a_{i+1}$  and if $a_i =a_{i+1}$ then $b_i \geq b_{i+1}$;
\item $a_i \geq b_i$ for $1 \leq i \leq t$.
\end{enumerate}

\begin{exa}
Let
$$ \ds \pi_1= 
\left(\begin{array}{llll}
1& 1 &  2& 2\\
2  &3^\circ & \ov{3} & \ov{3} \cr\end{array}\right), \ 
 \pi_2=\left(\begin{array}{llll}
1&1 & 2& 2\\
3^\circ & \ov{2} & 2^\circ & 2^\circ \cr \end{array}\right).$$
Then $\pi_1 \in \mathcal{A}, \pi_1 \notin \mathcal{A}^*, \pi_2 \in \mathcal{A}^*, \pi_2 \notin \mathcal{A}$. \end{exa}

\section{Orthosymplectic jeu-de-taquin and insertion}\label{sec:jdt}

A {\em punctured} $spo$-tableau is an $spo$-tableau with an entry removed from one of its boxes.  The jeu-de-taquin algorithm for $spo$-tableaux moves an empty box in the first column of a 
punctured $spo$-tableau through the tableau using {\em forward slides}.  A forward slide is defined as follows:

$$\label{jdtmoves}\vy{ & x \cr y \cr } \rightarrow
\left\{\begin{array}{ll}
\vy{x &  \cr  y \cr} & \text{if $x < y$ or $x=y$ and $x,y \in B_1$} \\[5mm]
\vy{y & x \cr  \cr} & \text{if $y<x $ or $x=y$ and $x,y \in B_0$.}
\end{array}\right.\ \ \ 
$$
 
The result of performing a forward slide on a punctured $spo$-tableau is a punctured $spo$-tableau and  forward slides can be continued, beginning with an empty box in the first column, until the empty box lands in an outer corner \cite[Lemmas 5.2 \& 5.3]{benkart}, which means there are no boxes directly below it or to its immediate right.  The procedure is also reversible and the slides that reverse the procedure are called {\em reverse slides}.

Ordinary Robinson-Schensted-Knuth (RSK) {\em row insertion} for semistandard 
$\lambda$-tableaux gives an algorithm for inserting a letter into a semistandard tableau and returns a new semistandard tableau with one more box (see \cite{fulton, sagan, schensted, stanley}).  The procedure is as follows for a semistandard $\lambda$-tableau $T$ and a positive integer $x$:
if $x$ is greater than or equal to all entries in the first row, add $x$ in a new box at the end of the first row, which yields a new semistandard tableau $T \leftarrow x$.  Otherwise, remove the left-most entry in the first row that is strictly larger than $x$, place  $x$ in that box, and bump the displaced entry into the second row by repeating the procedure with the displaced positive integer in the second row so that it is placed either at the end of the second row or replaces an entry in the second row.  Continue this process for successive rows until a new box is created  at the end of a row or at the bottom of the tableau, forming a new row.  The final step gives  a new semistandard tableau $S=T \leftarrow x$  and the added box belongs to an  {\em outer corner} of the Young diagram.  The row insertion algorithm can be reversed; starting with a box in an outside corner, the entry $x$ in that box replaces $x^\prime$ in the row above it, where $x^\prime$ is the rightmost entry in the row with $x^\prime < x$ and the procedure continues until an entry is bumped out of the first row.

Ordinary RSK-column insertion is similar except that $x$ is added to the end of  the first column if it is larger than or equal to all entries in the column, and otherwise replaces the smallest entry $y$ that is strictly larger than it in the first column and bumps $y$ into the second column.  In this case the process continues until a new box is created at the end of a column or until a new column is formed with one entry, with the new box necessarily appearing in an outer corner.   

We will also use {\em dual row insertion} of a letter $x$ into a tableau $T$ where the transpose $T^t$ is semistandard.   In this case, $x$ is appended to the end of the first row of $T$ if $x$ is larger than every entry in the row.  Otherwise, $x$ replaces the leftmost entry $x^\prime$  in the first row that satisfies $x^\prime \geq x$.  The entry $x^\prime$ is then bumped into the second row of the tableau, and the procedure repeats until a new box is created at the end of a row or at the bottom of the tableau.  This results in a new tableau $S=x \rightarrow T$ such that $S^t$ is semistandard.

We now define an insertion algorithm for spo-tableaux.  Our algorithm is slightly different than the insertion algorithm  in \cite{benkart} and \cite{stokkeosp}, where column insertion is applied to elements from $B_1$.   The algorithm we describe here instead uses dual row insertion for elements from $B_1$, which ensures that the second tableau in the triple we define in Section \ref{sec:cauchy1} is semistandard.  To insert an entry $x \in B_0 \cup B_1$ into an $spo$-tableau $T$, begin by considering the first row.  
\begin{enumerate}
\item If $x \in B_0$ and $x$ is larger than or equal to every entry in the first row or if $x \in B_1$ and $x$ is strictly larger than every entry in the first row, append $x$ to the end of the first row, giving $T \leftarrow x$. 
\item  Otherwise, if $x \in B_1$, find the left-most entry $x^\prime$ in the first row  with $x^\prime \geq x$ and replace this entry with $x$, bumping $x^\prime$.  If $x \in B_0$ find the left-most entry $x^\prime$ in the first row with $x^\prime > x$, replace $x^\prime$ with $x$, bumping $x^\prime$ from the first row.  
\item Insert $x^\prime$ into the second row by repeating the procedure and continue for successive rows unless a symplectic violation occurs.  
\item If no symplectic violation is introduced at any point in the process, the procedure ends when a new box is created at the end of a row or at the bottom of the tableau.  The resulting tableau is $T \leftarrow x$.
\item On the other hand, if a symplectic violation is introduced at some point in the procedure, then for some $i$,  an $\overline{i}$ is bumped out of row $i$ and into row $i+1$ by an $i$.  The Berele insertion algorithm is applied at this point:  find the least $i$ for which a violation occurs and, at this stage, instead of replacing the $\overline{i}$ with an $i$, remove the $\overline{i}$ from the tableau.   Apply jeu de taquin forward slides to the empty box until it lands in an outer corner, giving an $spo$-tableau $T \leftarrow x$ that has one less box than $T$.  
\end{enumerate}

\begin{lem}\label{firstlemma} Let $T$ be an $spo$-tableau and let $b \in B_0 \cup B_1$.  Then $T \leftarrow b$ is an $spo$-tableau.\end{lem}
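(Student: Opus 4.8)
The plan is to verify directly that $T \leftarrow b$ satisfies the three requirements of an $spo$-tableau: its $B_0$-part $S$ is a semistandard symplectic tableau, the complementary skew tableau of $B_1$-entries is strictly increasing along rows and weakly increasing down columns, and the shape is a genuine Young diagram. The guiding observation is that every element of $B_1$ exceeds every element of $B_0$, so an inserted or bumped $B_1$-letter can only displace a $B_1$-letter, while a $B_0$-letter displaces a $B_0$-letter unless all $B_0$-entries in its row are weakly smaller, in which case it displaces the first $B_1$-entry. Hence the bumping route consists of an initial $B_0$-phase followed by at most one terminal $B_1$-phase, and a symplectic violation, being an event purely among $B_0$-entries, can arise only during the $B_0$-phase. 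I would split into three cases: $b \in B_1$; $b \in B_0$ with no violation; and $b \in B_0$ with a violation.

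If $b \in B_1$, or once the route enters its $B_1$-phase, the traveling letter exceeds every $B_0$-entry it meets and so never disturbs $S$, and the steps are precisely dual row insertion restricted to the $B_1$-rows. The standard behaviour of dual insertion keeps rows strictly increasing and columns weakly increasing and places the new box in an outer corner, so the $B_1$-skew tableau and the overall shape remain valid while $S$ is left intact; no symplectic violation can occur, since only $B_1$-entries move.

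If $b \in B_0$ and no violation occurs, the $B_0$-phase is ordinary RSK row insertion, which preserves weakly increasing rows and strictly increasing columns and hence keeps $S$ semistandard, while the no-violation hypothesis preserves the symplectic condition. The only delicate point is the transition box: when a $B_0$-letter $b'$ displaces the first $B_1$-entry of some row $r$, it lengthens the $B_0$-part of row $r$ by one box, say in column $k$. Using the standard fact that successive RSK bumping columns weakly decrease, I would show that $k$ is at most the column from which $b'$ was bumped out of row $r-1$; since that column lies in the $B_0$-part of row $r-1$ and the rows of $S$ weakly increase, the entry at $(r-1,k)$ is a $B_0$-element strictly smaller than $b'$. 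This simultaneously shows that $S$ remains a Young diagram, that column $k$ stays strictly increasing, and that no $B_1$-entry is left above a $B_0$-entry; the displaced $B_1$-entry then enters the $B_1$-phase already treated.

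The main obstacle is the violation case. Let $i$ be the least row at which a traveling $i \in B_0$ would bump an $\overline{i}$ into row $i+1$. Minimality forces the bumps in rows $1,\dots,i-1$ to be $B_0$-on-$B_0$ with no switch, so no $B_1$-entry has yet been touched and the symplectic condition still holds above row $i$. I would first verify that discarding the incoming $i$ and leaving $\overline{i}$ in place produces an $spo$-tableau, namely the updated rows $1,\dots,i-1$ atop the unchanged rows $i, i+1, \dots$: the junction between the updated row $i-1$ and the original row $i$ is controlled because the bumped position of row $i-1$ now holds an entry even smaller than before, so column strictness, the symplectic condition, and the untouched $B_1$-part all survive. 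Removing $\overline{i}$ then yields a punctured $spo$-tableau, and the $spo$ jeu-de-taquin forward slides of \cite[Lemmas 5.2 \& 5.3]{benkart}, applied to the resulting empty box, carry it to an outer corner and return an $spo$-tableau with one fewer box. The genuinely new subtlety is that $B_1$-entries may sit to the right of and below the removed $\overline{i}$; this is harmless because the analysis above involves only $B_0$-entries and because Benkart's forward-slide rules are already defined on mixed $B_0 \cup B_1$ fillings, so their lemmas apply without change.
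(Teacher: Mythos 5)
Your proof is correct and follows essentially the same route as the paper's: a direct check that the bumping steps preserve the row and column conditions of the $B_0$- and $B_1$-parts, with the cancellation case delegated to the jeu de taquin lemmas of \cite{benkart}. You simply carry out in more detail several points the paper leaves implicit (the $B_0$-to-$B_1$ transition box, the shape of $S$, and the verification that the object handed to jeu de taquin is a punctured $spo$-tableau).
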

\begin{proof}
The design of the algorithm preserves the row conditions.  Suppose an entry $y$ bumps an entry $x$ from a given row  and $x \in B_0$; if there is an entry $z$ below $x$ then  $z>x$ so insertion of $x$ into the subsequent row either places $x$ in the same column or in a column to the left, which necessarily has entries above it that are strictly less than $x$ since $y<x$.  On the other hand, if $x \in B_1$ and there is an entry $z$ below it, then $x \leq z$ and insertion of $x$ into the subsequent row places $x$ in the same column or to the left, with entries above it weakly less than $x$.

If a symplectic violation occurs at some point in the insertion process,  then $T \leftarrow b$ is an $spo$-tableau since the output of the jeu de taquin process is an $spo$-tableau.
\end{proof}

\begin{exa}  This example illustrates the insertion process when the entry $\overline{1}$ is inserted into the $spo$-tableau $T$.
$$T=\vy{ 1 &  \ov{1} & {\bf 2} & 1^\circ \cr 2 & \ov{2} & 3 & 2^\circ  \cr 4 & 4  & 2^\circ  \cr  5 \cr} \shiftbar{.62}{1}\ \ \ \ \  \vy{ 1 &  \ov{1} & \ov{1} & 1^\circ \cr 2 & {\bf \ov{2}} & 3 & 2^\circ  \cr 4 & 4  & 2^\circ  \cr  5 \cr} \shiftnobar{.2}{2} \ \ \ \ \  
\vy{ 1 &  \ov{1} & \ov{1} & 1^\circ \cr 2 &  & 3 & 2^\circ  \cr 4 & 4  & 2^\circ  \cr  5 \cr} \ \ \ \ \  \vy{ 1 &  \ov{1} & \ov{1} & 1^\circ \cr 2 & 3 &  2^\circ  \cr 4 & 4  & 2^\circ  \cr  5 \cr}
$$

\end{exa}

\section{Establishing the orthosymplectic Cauchy identity}\label{sec:cauchy1}
In this section we give a bijection, which we call the $spo$-correspondence, between the set $\mathcal{A}$ of two-line arrays defined in Section \ref{sec:prelim} and the set of triples $(\tilde{P}_\lambda, P_\lambda, L)$, where $\tilde{P}_\lambda$ is an $spo$-tableau, $P_\lambda$ a semistandard $\lambda$-tableau and $L$ is a Burge two-line array.  Applying the bijection between Burge two-line arrays and semistandard tableaux $Q_\beta$, with even length columns as in \cite{sundaram} then gives a bijection between $\mathcal{A}$ and triples $(\tilde{P}_\lambda, P_\lambda, Q_\beta)$, which establishes an orthosymplectic analogue of the Cauchy identity.

Given a two-line array $\pi \in \mathcal{A}$ with $s$ columns, we perform the steps described below to obtain a triple $(\tilde{P}_\lambda, P_\lambda, L)$.

\begin{enumerate}
	\item Set $\tilde{P}_{0} = P_{0} = L_{0} = \phi$.
	\item Working from left to right, and beginning with the first column, consider the $k^{th}$ column in $\pi$, with top entry $i$ and bottom entry $j$. To obtain $(\tilde{P}_{k}, P_{k}, L_k) $ from $(\tilde{P}_{k-1}, P_{k-1}, L_{k-1}) $, apply the $spo$-insertion algorithm to get $\tilde{P}_k=\tilde{P}_{k-1}\leftarrow j $. This operation either adds or deletes exactly one box in $\tilde{P}_{k-1}$. 
\begin{enumerate}	
\item If a box is added by the insertion of $j$, form $P_{k}$ by adding a new box to $P_{k-1}$ containing $i$ at the same position where a new box was added to $\tilde{P}_{k-1}$ to get $\tilde{P}_{k}$. This creates $P_k $ with the same 
shape as that of $\tilde{P}_{k} $. In this case, let $L_k = L_{k-1} $.

\item If $\tilde{P}_k$ has one less box than $\tilde{P}_{k-1}$, the extra box in $\tilde{P}_{k-1}$ occurs in an outer corner.  Locate the position of the extra box in $P_{k-1}$ and perform reverse column bumping on the element that belongs to the extra box to get $P_k $, which has the same shape as $\tilde{P}_k $.  This will bump an entry $a$ from the first column.  Add the column $\binom{i}{a}$ on the right of $L_{k-1} $ to obtain $L_k $.
\end{enumerate}

	\item Let $\tilde{P} =\tilde{P}_{s} $, $P=P_s $ and $L=L_s$, obtaining the triple $(\tilde{P}_\lambda,P_\lambda,L)$.
\end{enumerate}

\begin{exa}\label{firstmap} Let
$$\pi = 
\begin{pmatrix*}[l]
2 & 2 & 2 & 3 & 3 & 4 & 4 & 4 & 4 \\
\overline{1} & 1^\circ & 2^\circ & 1 & \overline{1} & 1 & 1 & 1^\circ & 2^\circ
\end{pmatrix*}
$$
The tableaux and two-line arrays obtained at each step of the algorithm are listed below.\\
\begin{align*}
\tilde{P_1} =\vy{
\overline{1} \cr}
\qquad
P_1 = 
\vy{2\cr}
 \qquad
L_1 = \phi
\end{align*}
\begin{align*}
\tilde{P_2} = 
\vy{ \overline{1} & 1^\circ
\cr} \qquad
P_2 = 
\vy{2 & 2\cr}
 \qquad
L_2 = \phi
\end{align*}
\begin{align*}
\tilde{P_3} = 
\vy{
\overline{1} & 1^\circ & 2^\circ \cr}
\qquad
P_3 = 
\vy{
2 & 2 & 2 \cr} \qquad
L_3 = \phi
\end{align*}
\begin{align*}
\tilde{P_4} = 
\vy{
1^\circ & 2^\circ\cr }\qquad
P_4 = 
\vy{
2 & 2 \cr}
 \qquad
L_4 = 
\begin{pmatrix*}[l]
3 \\
2
\end{pmatrix*}
\end{align*}

\begin{align*}
\tilde{P_5} = 
\vy{
\overline{1} & 2^\circ \cr
1^\circ \cr}  \qquad
P_5 = 
\vy{
2 & 2 \cr
3\cr }
 \qquad
L_5 = 
\begin{pmatrix*}[l]
3 \\
2
\end{pmatrix*}
\end{align*}

\begin{align*}
\tilde{P_6} = 
\vy{
1^\circ & 2^\circ \cr
} \qquad
P_6 = 
\vy{
2 & 2
\cr} \qquad
L_6 = 
\begin{pmatrix*}[l]
3 & 4\\
2 & 3
\end{pmatrix*}
\end{align*}

\begin{align*}
\tilde{P_7} = 
\vy{
1 & 2^\circ \cr
1^\circ \cr} \qquad
P_7 = 
\vy{
2 & 2 \cr
4 \cr} \qquad
L_7 = 
\begin{pmatrix*}[l]
3 & 4\\
2 & 3
\end{pmatrix*}
\end{align*}

\begin{align*}
\tilde{P_8} = 
\vy{
1 & 1^\circ \cr
1^\circ & 2^\circ
\cr} \qquad
P_8 = 
\vy{
2 & 2\cr
4 & 4 \cr
}\qquad
L_8 = 
\begin{pmatrix*}[l]
3 & 4\\
2 & 3
\end{pmatrix*}
\end{align*}

\begin{align*}
\tilde{P_9} = \vy{1 & 1^\circ & 2^\circ \cr
1^\circ & 2^\circ \cr}
\qquad
P_9 = 
\vy{
2 & 2 & 4 \cr
4 & 4 \cr} \qquad
L_9 = 
\begin{pmatrix*}[l]
3 & 4\\
2 & 3
\end{pmatrix*}
\end{align*}
The image of $\pi$ is $(\tilde{P_9}, P_9, L_9)$.

\end{exa}

\begin{lem}\label{strictlyrightlem} Suppose that $T$ is an $spo$-tableau and that $x, x^\prime \in B_0 \cup B_1$ with $x \leq x^\prime$ if $x, x^\prime \in B_0$ and $x<x^\prime$ otherwise.  Suppose that $T \leftarrow x$ has one more box $B$ than $T$ and that $(T \leftarrow x)\leftarrow x^\prime$ has one more box $B^\prime$ than $T \leftarrow x$.  Then $B^\prime$ is strictly right of $B$.\end{lem}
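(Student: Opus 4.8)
The plan is to establish the $spo$-analogue of the classical RSK row-bumping lemma by tracking the two bumping routes simultaneously and inducting on rows. Because the hypotheses guarantee that each of $T \leftarrow x$ and $(T\leftarrow x)\leftarrow x'$ adds a box, no symplectic violation is introduced at any stage (otherwise a box would be deleted), so throughout the argument both insertions consist only of the bump/append steps (1)--(4) of the $spo$-insertion algorithm and never invoke jeu de taquin. Write $w_1 = x$ and let $w_r$ denote the entry inserted into row $r$ by the first insertion, so that $w_{r+1}$ is the entry bumped out of row $r$; let $c_r$ be the column of row $r$ in which $w_r$ is placed or appended, so that $B=(q,c_q)$ where $q$ is the last row the first insertion reaches. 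Define $w_r'$ and $c_r'$ analogously for the second insertion, with $w_1'=x'$ and $B'=(p,c_p')$. The key invariant I will propagate down the rows is that the pair $(w_r,w_r')$ satisfies the same relation as $(x,x')$, namely $w_r \le w_r'$ when both lie in $B_0$ and $w_r<w_r'$ otherwise.

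First I would treat a single row. Suppose both insertions reach row $r$ and the invariant holds for $(w_r,w_r')$. If the first insertion appends $w_r$ at the end of row $r$, then $w_r$ is $\ge$ (resp.\ $>$) every entry of the row when $w_r\in B_0$ (resp.\ $w_r\in B_1$); since $w_r'\ge w_r$ the same holds for $w_r'$, so the second insertion also appends, giving $c_r' = c_r+1$ in the same row. Otherwise the first insertion bumps at a column $c_r$, leaving every entry in columns $\le c_r$ weakly below $w_r$. I then split on the type of $x'$: if $w_r'\in B_0$ the hypothesis forces $w_r\in B_0$ with $w_r\le w_r'$, so no entry in columns $\le c_r$ is strictly larger than $w_r'$; if $w_r'\in B_1$ then $w_r<w_r'$, so every entry in columns $\le c_r$ is strictly smaller than $w_r'$ and hence none is $\ge w_r'$. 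In either case the $spo$-insertion rule places $w_r'$ strictly to the right, i.e.\ $c_r'>c_r$. Moreover, since $c_r'>c_r$ the entry bumped by the second insertion is the original row-$r$ entry at column $c_r'$, weakly to the right of the entry $w_{r+1}$ bumped by the first insertion at column $c_r$; as rows of an $spo$-tableau are weakly increasing this gives $w_{r+1}'\ge w_{r+1}$, and the strict increase of the $B_1$-entries along a row upgrades this to $w_{r+1}'>w_{r+1}$ whenever a $B_1$-entry is involved. Thus the invariant passes to row $r+1$.

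Iterating the single-row step shows $c_r'>c_r$ for every row $r$ reached by both insertions, and the appending analysis shows that whenever the first insertion terminates at a row also reached by the second, the second terminates there too; hence $p\le q$. To finish I would invoke the standard fact that a bumping route moves weakly left as it descends: an entry bumped from column $c$ of row $r$ is, by the column conditions of an $spo$-tableau (strict increase of $B_0$-entries and weak increase of $B_1$-entries down columns, together with $B_0<B_1$), placed into row $r+1$ at a column $\le c$. Applying this to the first route from row $p$ down to row $q$ gives $c_p\ge c_q$, and combining with $c_p'>c_p$ yields $c_p'>c_q$; that is, $B'=(p,c_p')$ lies strictly to the right of $B=(q,c_q)$.

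The main obstacle is the inductive step, where the hybrid nature of the algorithm must be handled: the bumping rule differs for $B_0$- and $B_1$-entries, and one must verify in each admissible type combination both that the second placement is strictly right and, more delicately, that the bumped pair $(w_{r+1},w_{r+1}')$ again satisfies the correct weak-or-strict relation. The strictness in the cases involving $B_1$ is exactly what the strict left-to-right increase of $B_1$-entries supplies, and the impossible combination $w_{r+1}\in B_1$, $w_{r+1}'\in B_0$ is ruled out because $w_{r+1}'\ge w_{r+1}$. Confirming that no symplectic violation can intervene, given the box-count hypotheses, is a minor but necessary preliminary.
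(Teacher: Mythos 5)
Your proof is correct and follows essentially the same route as the paper's: both arguments observe that the box-adding hypothesis rules out any symplectic cancellation, reduce to the classical RSK row-bumping lemma, and propagate down the rows the invariant that the bumped pair stays weakly ordered in the $B_0$ case and strictly ordered when a $B_1$ entry is involved. The paper states this in a few lines with a reference to Fulton, whereas you have written out the full induction, including the weak leftward drift of the first route and the termination comparison, all of which check out.
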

\begin{proof}   Since each insertion adds a box to the starting tableau, the $spo$-insertion process is similar in this case to regular row insertion, except that an entry $y \in B_1$ may bump and replace an identical entry $y$ from a row during the process.  Since $x<x^\prime$ if either element belongs to $B_1$, the proof involves a slight adjustment of the one for regular insertion (see, for example \cite{fulton}). If $x$ bumps an entry $y$ from the first row and $x^\prime$ subsequently bumps $y^\prime$ from the first row then, since $x\leq x^\prime$, $y^\prime$ must belong to a box that is strictly right of $y$ in the first row of $T$.  Furthermore, if $y,y^\prime \in B_0$ then $y\leq y^\prime$ and otherwise $y<y^\prime$ so the same reasoning applies to subsequent rows, which inductively gives the result. \end{proof}

\noindent The following  two lemmas are needed to establish Theorem \ref{bijectionthm} and are orthosymplectic versions of  \cite[Lemmas 3.3 and 3.2]{sundaram}.    We omit the proofs since they are essentially the same as those given in \cite{sundaram}, adjusted slightly to accommodate jeu de taquin slides that involve entries from $B_1$.

\begin{lem}\label{taquinpathlem} Let $T$ be an $spo$-tableau and let $x, x^\prime \in B_0$ with $x \leq x^\prime$.  Suppose that $spo$-insertion of $x$ into $T$ causes a cancellation and $spo$-insertion of $x^\prime$ into $T \leftarrow x$ also causes a cancellation.  Then the jeu de taquin path of the empty box created by inserting $x$ ends in the same row or higher than the jeu de taquin path of the empty box created by insertion of $x^\prime$ into $T \leftarrow x$. \end{lem}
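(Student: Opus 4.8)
The plan is to follow the proof of \cite[Lemma 3.3]{sundaram}, keeping the same two-phase structure of an $spo$-insertion that terminates in a cancellation and modifying only the jeu de taquin phase to accommodate entries from $B_1$. For an insertion causing a cancellation I would record two paths: the \emph{bumping route} $R$, the sequence of boxes visited as the inserted $B_0$-element is bumped down through rows $1,\ldots,i$, and the \emph{jeu de taquin path} $J$, traced by the empty box created when the offending $\overline{i}$ is removed from row $i$ and slid to an outer corner. Write $R,J$ for the paths produced by inserting $x$ into $T$ (cancellation in row $i$, terminal outer corner in row $r_1$) and $R',J'$ for those produced by inserting $x'$ into $T\leftarrow x$ (cancellation in row $i'$, terminal outer corner in row $r_2$); the goal is $r_1\le r_2$.

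First I would observe that a cancellation forces the entire bumping route to consist of $B_0$-entries: a cancellation at level $i$ requires the unbarred $i$ to be the entry arriving in row $i$, and since $x=y_0\le y_1\le\cdots\le y_{i-1}=i$ along a bumping route, every bumped entry is $\le i$ and hence lies in $B_0$. Consequently Phase 1 — comparing $R$ with $R'$, showing $R'$ stays weakly to the right of $R$ and that the cancellation of $x'$ occurs no higher than that of $x$, so $i\le i'$ — takes place entirely among $B_0$-entries and is word-for-word the symplectic argument of \cite{sundaram}, using the row-insertion monotonicity already recorded in Lemma \ref{strictlyrightlem}: the arriving entry of $R'$ in each row weakly dominates that of $R$, so the $\overline{i}$-bump can only be triggered at or below the level where it is triggered for $x$.

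The real work is Phase 2: showing the two jeu de taquin paths are non-crossing, with $J'$ running weakly below and to the right of $J$, which yields $r_1\le r_2$. I would argue this by maintaining, slide by slide, the invariant that the empty box of $J'$ sits weakly below-right of the corresponding box of $J$, and checking that each forward slide preserves it. The configurations in which the slide rule moves a $B_0$-entry are exactly those treated in \cite{sundaram}; the genuinely new configurations are slides that move a $B_1$-entry or break a tie between equal entries, where the rule sends equal $B_1$-entries horizontally (empty box right) and equal $B_0$-entries vertically (empty box down). It is precisely this tie-breaking, together with the $spo$-tableau inequalities (columns strictly increasing among $B_0$-entries, rows strictly increasing among $B_1$-entries), that keeps the dominance of $J'$ over $J$ intact across each slide through a $B_1$-region.

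The main obstacle I anticipate is bookkeeping the interaction of the two insertions: $x'$ is inserted not into $T$ but into $T\leftarrow x$, which differs from $T$ by the lowered entries left along $R$ in rows $1,\ldots,i-1$ and by the rearrangement $J$ has already performed in rows $i,\ldots,r_1$. The delicate point is to confirm that neither modification can push $R'$ or $J'$ strictly left of, or strictly above, their counterparts — that is, that the reshaping caused by the first cancellation respects the dominance required in Phase 2, even where $R'$ or $J'$ re-enters the already jeu-de-taquin-modified region $i,\ldots,r_1$. Once the invariant has been verified against the finite list of slide configurations, including the $B_1$ ties, the conclusion $r_1\le r_2$ is immediate from $J'$ terminating weakly below $J$.
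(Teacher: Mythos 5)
Your plan matches the paper's exactly: the authors omit the proof of this lemma, stating only that it is essentially the same as \cite[Lemma 3.3]{sundaram}, ``adjusted slightly to accommodate jeu de taquin slides that involve entries from $B_1$'' --- which is precisely the two-phase reduction you describe. Your observation that a cancellation forces the entire bumping route to lie in $B_0$, so that only the jeu de taquin phase requires new case analysis, is a correct and useful sharpening of why the adjustment is only ``slight.''
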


\begin{lem}\label{initialstrip} Suppose that $T$ is an $spo$-tableau and let $x,x^\prime \in B_0$ with $x\leq x^\prime$.  If $spo$-insertion of $x^\prime$ into $T \leftarrow x$ causes a cancellation then so did the prior $spo$-insertion of $x$ into $T$. \end{lem}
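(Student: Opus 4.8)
The plan is to argue the contrapositive: assuming that $spo$-insertion of $x$ into $T$ causes no cancellation, I will show that $spo$-insertion of $x'$ into $T\leftarrow x$ causes none either. When $x$ produces no cancellation its insertion proceeds by ordinary (genuine) bumps and adds a box, so I can run the two insertions in parallel and compare their bumping routes, exactly as in the proof of Lemma \ref{strictlyrightlem}. Suppose, for a contradiction, that inserting $x'$ into $T\leftarrow x$ does cause a cancellation, and let $i$ be least such that the value $i\in B_0$ enters row $i$ of $T\leftarrow x$ and bumps an $\overline{i}$ into row $i+1$; here $\overline{i}$ is the leftmost entry of row $i$ exceeding $i$, say in column $c'$.

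First I would observe that the portions of both routes reaching row $i$ lie entirely in the symplectic part $B_0$: since $B_1$-insertion can only bump $B_1$-entries, once a route carries a $B_1$-value it keeps one, so a route whose value entering row $i$ lies in $B_0$ (here the $x'$-route, with entering value $i$) has all earlier entering values in $B_0$ as well. Thus the $B_1$-specific rules play no role near the cancellation, and the comparison reduces to Sundaram's symplectic setting. Next I would invoke the row-bumping comparison that underlies Lemma \ref{strictlyrightlem} (see \cite{fulton, sundaram}): since $x\leq x'$ and both insertions proceed by genuine bumps down to row $i$, the value entering each row under $x$ is at most the corresponding value under $x'$, the $x'$-route lies strictly to the right of the $x$-route, and the $x'$-route descends no lower than the $x$-route. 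The last point forces the $x$-route to reach row $i$: were it to terminate by adding a box in some row $r_0<i$, the $x'$-route would be obliged to add its box by row $r_0$ as well and could not reach row $i$. Writing $c$ for the column of the $x$-bump in row $i$, strictness gives $c<c'$.

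I then pin down the value $v_i$ entering row $i$ under the $x$-insertion. On one hand $v_i\leq i$ by the value comparison above (the $x'$-route enters row $i$ with value $i$); on the other hand $v_i$ is placed into row $i$ of $T\leftarrow x$, which is an $spo$-tableau by Lemma \ref{firstlemma}, so the symplectic condition forces $v_i\geq i$. Hence $v_i=i$. Since the $x$-route alters row $i$ only in column $c<c'$, column $c'$ still holds $\overline{i}$ in $T$ itself. The $x$-insertion now places $i$ into the (unaltered) row $i$ of $T$ and bumps the entry $w$ in column $c$; as the row is weakly increasing and $c<c'$, we get $i<w\leq\overline{i}$, whence $w=\overline{i}$. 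Thus inserting $x$ into $T$ inserts $i$ into row $i$ and bumps $\overline{i}$ into row $i+1$ — precisely a cancellation — contradicting our assumption. This contradiction establishes the contrapositive and hence the lemma.

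The main obstacle is establishing the row-bumping comparison in the $spo$ setting rather than the classical one, especially the ``descends no lower'' nesting together with the strict left-to-right ordering of the two routes. As noted, however, every value encountered along either route down to row $i$ is forced into $B_0$, so the dual-row-insertion and jeu-de-taquin modifications for $B_1$-entries never intervene; the comparison is then carried out by exactly the inductive row-by-row argument of \cite{fulton} used by Sundaram \cite{sundaram}, the only adjustment being the bookkeeping already recorded in Lemma \ref{strictlyrightlem}.
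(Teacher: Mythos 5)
Your argument is correct and follows the same route the paper intends: the paper omits this proof, deferring to Sundaram's Lemma 3.2 with a "slight adjustment" for $B_1$ entries, and your write-up is precisely that argument — observe that both bumping routes stay in $B_0$ down to the cancellation row, then run the classical row-bumping comparison to force the value entering row $i$ under the $x$-insertion to equal $i$ and to bump the $\overline{i}$ sitting weakly left of column $c'$. The only cosmetic point is that you should note explicitly why the $x$-route cannot terminate by \emph{appending} at the end of row $i$ (its entering value is at most $i$ while row $i$ of $T$ contains $\overline{i}$), which is immediate from the inequalities you already have.
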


\begin{thm} \label{bijectionthm}The $spo$-correspondence yields a triple $(\tilde{P}_\lambda,P_\lambda,L)$, where $\tilde{P}_\lambda$ is an $spo$-tableau, $P_\lambda$ is semistandard and $L$ is a Burge two-line array. \end{thm}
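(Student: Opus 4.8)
The plan is to verify, one component of the triple at a time, that the $spo$-correspondence produces objects of the claimed type, and that the shapes match. For $\tilde P_\lambda$, there is almost nothing to prove: at each step we apply $spo$-insertion to the current tableau, and Lemma~\ref{firstlemma} already guarantees that $spo$-insertion of any $b \in B_0 \cup B_1$ into an $spo$-tableau returns an $spo$-tableau. So $\tilde P_k$ is an $spo$-tableau for every $k$ by induction, and in particular $\tilde P_\lambda = \tilde P_s$ is one.

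The work lies in the semistandard tableau $P_\lambda$ and the Burge array $L$. I would proceed by induction on the number of columns $k$ of $\pi$, maintaining three invariants: (i) $P_k$ is semistandard with the same shape as $\tilde P_k$; (ii) $L_k$ is a Burge two-line array; and (iii) the top-row entries $a_i = i$ recorded into $P_k$ and $L_k$ are weakly increasing, which is exactly the weak-increase condition on the first rows of $P_k$ and $L_k$. The induction splits according to the two cases of the algorithm. In the \emph{box-added} case, $P_k$ is obtained from $P_{k-1}$ by placing the top entry $i$ of the current column in the new outer-corner box, exactly as in classical RSK recording; semistandardness of $P_k$ then follows from the fact that successive top entries are weakly increasing (property (1) of $\mathcal A$) together with Lemma~\ref{strictlyrightlem}, which tells us that among columns sharing the same value $a_i = i$ the newly added boxes march strictly to the right, so equal recording entries never stack vertically. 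This is the standard argument that the recording tableau is semistandard, and it uses the same-value strict-right-movement lemma in the usual way.

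In the \emph{box-deleted} case I need Lemmas~\ref{taquinpathlem} and~\ref{initialstrip}. Here $\tilde P_k$ has one fewer box than $\tilde P_{k-1}$, a cancellation occurred, and we reverse-column-bump the recording entry out of the vacated outer corner of $P_{k-1}$, appending the resulting column $\binom{i}{a}$ to $L_{k-1}$. I would argue that $P_k$ is again semistandard and shape-matches $\tilde P_k$ because reverse column bumping is the inverse of an insertion that removes precisely one box from an outer corner; the delicate point is that the box removed from $P_{k-1}$ sits in the corner corresponding to the box $\tilde P_k$ lost, which is where Lemma~\ref{taquinpathlem} (monotonicity of the jeu-de-taquin exit rows for a weakly increasing pair of cancelling inserts) and Lemma~\ref{initialstrip} (a later cancellation forces the earlier one) are used to control the positions across consecutive same-$a_i$ columns. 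For the Burge conditions on $L_k$: the bottom entries $a$ come from the first column of $P_{k-1}$, the top entries are the $a_i = i$ values, so the weak-increase-with-tie-breaking condition (1) follows from property (1) of $\mathcal A$ together with Lemmas~\ref{taquinpathlem}--\ref{initialstrip}, while the strict inequality $a_i > a$ needed for condition (2) comes from comparing the recording value $i$ against the entry $a$ bumped out of the top of column one of $P_{k-1}$, which lies strictly below the row indexed by $i$'s history.

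The main obstacle I anticipate is precisely the box-deleted case: establishing that the reverse column bumping on $P$ removes a box in the correct outer corner and that the two-line array $L$ inherits the Burge inequalities. This is the orthosymplectic analogue of Sundaram's argument, and the whole point of stating Lemmas~\ref{taquinpathlem} and~\ref{initialstrip} is to make this bookkeeping go through; the only genuinely new ingredient beyond \cite{sundaram} is that entries from $B_1$ may bump equal entries during insertion, so I must check that the monotonicity and shape-matching statements survive the presence of such $B_1$-bumps, which is handled by the strict inequality $x < x'$ assumed whenever $B_1$ entries are involved in Lemma~\ref{strictlyrightlem}. Once both cases are verified and the invariants are closed under the inductive step, setting $k = s$ yields the desired triple $(\tilde P_\lambda, P_\lambda, L)$.
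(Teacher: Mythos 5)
Your overall structure matches the paper's: $\tilde P_\lambda$ via Lemma \ref{firstlemma}, semistandardness of $P_\lambda$ via the weak increase of the top row of $\pi$ plus Lemma \ref{strictlyrightlem} for strict increase down columns, and the Burge conditions on $L$ via Lemmas \ref{taquinpathlem} and \ref{initialstrip}. Your treatment of $\tilde P_\lambda$ and $P_\lambda$ is fine; note that in the box-deleted case the paper needs nothing more than the fact that reverse column insertion preserves semistandardness, so Lemmas \ref{taquinpathlem} and \ref{initialstrip} play no role there --- their entire job is in verifying that $L$ is a Burge array.

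The genuine gap is in your verification of the Burge condition $i_k > j_k$ (your ``$a_i > a$''). Your justification --- that the bumped entry $a$ ``lies strictly below the row indexed by $i$'s history'' --- conflates the value of $a$ with its position in $P_{k-1}$ and does not yield strictness. The entries of $P_{k-1}$ are themselves top-row values of $\pi$ recorded at earlier columns, so what comes for free is only the weak inequality $j_k \leq i_k$. To rule out equality the paper argues by contradiction: if $i_k = j_k = a$, then $a$ occurs in $P_t$ (it is about to be bumped out of the first column), so some column of $\pi$ with top entry $a$ lying to the left of column $t$ produced a box-addition; but Lemma \ref{initialstrip} forces every insertion coming from a column of $\pi$ with top entry $a$ to the left of column $t$ to have caused a cancellation, so no entry equal to $a$ can be present in $P_t$ --- a contradiction. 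Without this use of Lemma \ref{initialstrip}, condition (2) of the Burge array is not established. Similarly, for the tie-breaking condition $j_k \le j_{k+1}$ when $i_k = i_{k+1}$ you should spell out the chain: both columns cause cancellations, Lemma \ref{taquinpathlem} orders their jeu de taquin exit rows, this orders the starting rows of the two reverse column-bumping paths in the recording tableaux, and Schensted's property of column insertion then gives $j_k \le j_{k+1}$; ``control the positions across consecutive same-$a_i$ columns'' is too vague to certify this.
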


\begin{proof} 
Lemma \ref{firstlemma} guarantees that $\tilde{P}_\lambda$ is an $spo$-tableau.  If $P_{k-1}$ is semistandard and $P_k$ has one less box than $P_{k-1}$, then $P_k$ is also semistandard since reverse column insertion preserves semistandard properties.  If a box is added to $P_{k-1}$ to form $P_k$, then it is added at an outer corner, since the corresponding box added to $\tilde{P}_{k-1}$ to form $\tilde{P}_k$ is at the same outer corner.  
Since the entries in the top row of  $\pi$ are weakly increasing, the rows and columns of the associated $P_k$ are weakly increasing.  Further, whenever $a_i$ and $a_{i+1}$ in the top row of $\pi$ are equal, the corresponding bottom entries $b_i$ and $b_{i+1}$ satisfy $b_i \leq b_{i+1}$ and  $b_i \neq b_{i+1}$ if $b_i, b_{i+1} \in B_1$, so it follows from Lemma \ref{strictlyrightlem} that the columns of $P_k$ are strictly increasing.

The proof that $L$ is a two-line Burge array is the same as in the symplectic case, which is covered in \cite{sundaram}.  Suppose that $L=\begin{pmatrix*}[l] i_1 & i_2 & \cdots & i_r \cr j_1 & j_2 & \cdots & j_r \cr \end{pmatrix*}.$
The entries in the top row of $L$ are weakly increasing by construction. Any entry $i_k$ in the top row of $L$ records the top entry in $\pi$ that corresponds to a cancellation when the entry below it in $\pi$ is inserted into some $\tilde{P}_t$.  The entry $j_k$ belongs to $P_t$ so it appears to the left of $i_k$ (so to the left of column $t$) in the top row of $\pi$ so $j_k \leq i_k$. If $i_k = j_k=a$ then there is at least one additional entry $a$ in the top row of $\pi$ left of column $t$ and, since $j_k=a$, $a$ bumps out of $P_t$.  By Lemma \ref{initialstrip} all entries in the bottom row of  $\pi$ to the left of column $t$ for which there is an $a$ in the top row must cause a cancellation so there cannot be any entries equal to $a$ in $P_t$ so $j_k <i_k$.

Finally, suppose that $i_k=i_{k+1}=a$ and let $b$ and $b^\prime$ denote the entries below $i_k$ and $i_{k+1}$ in $\pi$ respectively.  Both entries cause cancellations and by Lemma \ref{taquinpathlem}, the jeu de taquin path for $b^\prime$ is in a weakly lower row than that of $b$.  Then the corresponding column bumping path in $P_{t+1}$, which bumps $j_{k+1}$ out of the first column begins in a row weakly lower than the column bumping path in $P_t$, which bumps $j_k$ out of the first column.  By a property of column insertion \cite{schensted}, $j_k \leq j_{k+1}$.  

\end{proof}
To show that the $spo$-correspondence is invertible, we describe an algorithm below that maps a triple $(\tilde{P}_\lambda, P_\lambda, L)$, where $\tilde{P}_\lambda$ is an $spo$-tableau, $P_\lambda$ a semistandard tableau and $L$ a Burge two-line array to a two-line array $\pi \in \mathcal{A}$.  Given a two-line array $\pi$, the $spo$-correspondence produces  a triple  $(\tilde{P}_\lambda, P_\lambda, L)$ where the first row of  $L$ records entries from the top row of $\pi$ that correspond to cancellations, while the entries in $P_k$ are entries from the top row of $\pi$ that correspond to additions at each step.  If an entry appears in both  $P_k$  and in $L_k$, Lemma \ref{initialstrip} tells us that  the cancellations occurred prior to the additions.   Lemma \ref{strictlyrightlem} ensures that the order in which elements from the bottom row of $\pi$ were inserted into $\tilde{P}_k$ can be determined from $P_k$.  It thus follows that the map we now describe is the inverse of the above map.

\begin{enumerate}
	\item Set $\tilde{P_s} = \tilde{P}_\lambda $, $P_s = P_\lambda $, $L_s = L $ and ${\pi}_{s} = \phi $, where $n$ is the total number of entries in $P_\lambda $ and $L$ combined.
	\item Beginning with $k=s$,  to obtain $(\tilde{P}_{k-1},P_{k-1},L_{k-1}) $ and ${\pi}_{k-1} $ from $(\tilde{P}_{k}, P_{k}, L_{k})$ and ${\pi}_{k}$, find the largest entry $i$ among all the entries in $P_{k} $ and top row of $L_{k} $. 
	\begin{enumerate}
		\item If  $i$ belongs to $P_{k}$, then
		select the rightmost entry $i$ in $P_{k} $ and delete the box containing $i$ to get $P_{k-1} $. Since $\tilde{P}_{k} $ and $P_{k} $ are of the same shape, $\tilde{P}_k$ and $P_{k-1} $ differ by exactly one box. Perform reverse $spo$-insertion on the entry in the extra box in $\tilde{P}_k $ to obtain $\tilde{P}_{k-1} $ and suppose this bumps $j$ from the first row.  Prepend the column $ \binom{i}{j}$ to ${\pi}_{k} $ to get ${\pi}_{k-1} $ and let  $L_{k-1} = L_k. $
		\item If $i$ does not belong to $P_k$ (so only belongs to $L_{k} $),
		then select the right-most column $ \binom{i}{a}$ in $L_k$ and delete this column from $L_{k} $ to get $L_{k-1} $. Then column insert $a$ into $P_{k} $ to get $P_{k-1} $. This creates a new box and since $\tilde{P}_{k} $ and $P_{k}$ have the same shape, $\tilde{P}_{k} $ and $P_{k-1} $ differ by exactly one box.  Add an empty box to $\tilde{P}_{k} $ in this location and perform reverse jeu de taquin to slide the empty box to the highest possible row $r$ in the first column. Replace the hole with the row number $r$, and replace the rightmost $r$ in that row with $\overline{r}$. Note that there is at least one $r$ in the row $r$, since we placed  an entry $r$ in the empty box. Bump the entry $r$ into the row above it and perform reverse row bumping until some entry $j$ gets bumped out of the first row to obtain $\tilde{P}_{k-1} $. Prepend column $\binom{i}{j}$ to ${\pi}_{k} $ to obtain ${\pi}_{k-1} $.
	\end{enumerate}
	\item Let $\pi=\pi_0$.
\end{enumerate}

\begin{exa}

We will apply the inverse algorithm to the triple obtained in Example \ref{firstmap}. Let
$$ (\tilde{P}_\lambda, P_\lambda, L ) = \Bigg ( \quad
\vy{
1 & 1^\circ & 2^\circ \cr
1^\circ & 2^\circ \cr
}, \quad
\vy{
2 & 2 & 4 \cr
4 & 4\cr},
 \quad
\begin{pmatrix*}[l]
3 & 4 \\
2 & 3
\end{pmatrix*}
\Bigg )
$$
We obtain the following tableaux and two-line arrays.
\begin{align*}
\tilde{P}_9 = 
\vy{
1 & 1^\circ & 2^\circ \cr
1^\circ & 2^\circ \cr}
 \quad
P_9 = 
\vy{
2 & 2 & 4 \cr
4 & 4 \cr} \quad
L_9 = 
\begin{pmatrix*}[l]
3 & 4 \\
2 & 3
\end{pmatrix*} \quad
{\pi}_9 = \phi
\end{align*}

\begin{align*}
\tilde{P}_8 = 
\vy{
1 & 1^\circ \cr
1^\circ & 2^\circ \cr
} \quad
P_8 = 
\vy{
2 & 2 \cr
4 & 4\cr } \quad
L_8 = 
\begin{pmatrix*}[l]
3 & 4 \\
2 & 3
\end{pmatrix*} \quad
{\pi}_8 =
\begin{pmatrix*}[l]
4 \\
2^\circ
\end{pmatrix*}
\end{align*}

\begin{align*}
\tilde{P}_7 = 
\vy{
1 & 2^\circ \cr
1^\circ \cr
}\quad
P_7 =
\vy{
2 & 2 \cr
4 \cr}
 \quad
L_7 = 
\begin{pmatrix*}[l]
3 & 4 \\
2 & 3
\end{pmatrix*} \quad
{\pi}_7 =
\begin{pmatrix*}[l]
4 & 4 \\
1^\circ & 2^\circ
\end{pmatrix*}
\end{align*}

\begin{align*}
\tilde{P}_6 = 
\vy{
1^\circ & 2^\circ \cr
} \quad
P_6 =
\vy{
2 & 2 
\cr}\quad
L_6 = 
\begin{pmatrix*}[l]
3 & 4 \\
2 & 3
\end{pmatrix*} \quad
{\pi}_6 =
\begin{pmatrix*}[l]
4 & 4 & 4 \\
1 & 1^\circ & 2^\circ
\end{pmatrix*}
\end{align*}

\begin{align*}
\tilde{P}_5 = 
\vy{
\overline{1} & 2^\circ \cr
1^\circ \cr
} \quad
P_5 =
\vy{
2 & 2 \cr
3 \cr}
 \quad
L_5 = 
\begin{pmatrix*}[l]
3 \\
2
\end{pmatrix*} \quad
{\pi}_5 =
\begin{pmatrix*}[l]
4 & 4 & 4 & 4 \\
1 & 1 & 1^\circ & 2^\circ
\end{pmatrix*}
\end{align*}

\begin{align*}
\tilde{P}_4 = 
\vy{
1^\circ & 2^\circ \cr
} \quad
P_4 =
\vy{
2 & 2
\cr} \quad
L_4 = 
\begin{pmatrix*}[l]
3 \\
2
\end{pmatrix*} \quad
{\pi}_4 =
\begin{pmatrix*}[l]
3 & 4 & 4 & 4 & 4 \\
\overline{1} & 1 & 1 & 1^\circ & 2^\circ
\end{pmatrix*}
\end{align*}

\begin{align*}
\tilde{P}_3 = 
\vy{
\overline{1} & 1^\circ & 2^\circ \cr
} \quad
P_3 =
\vy{
2 & 2 & 2 \cr
} \quad
L_3 = \phi \quad
{\pi}_3 =
\begin{pmatrix*}[l]
3 & 3 & 4 & 4 & 4 & 4 \\
1 & \overline{1} & 1 & 1 & 1^\circ & 2^\circ
\end{pmatrix*}
\end{align*}

\begin{align*}
\tilde{P}_2 = 
\vy{
\overline{1} & 1^\circ\cr
} \quad
P_2 =
\vy{
2 & 2 \cr
} \quad
L_2 = \phi \quad
{\pi}_2 =
\begin{pmatrix*}[l]
2 & 3 & 3 & 4 & 4 & 4 & 4 \\
2^\circ & 1 & \overline{1} & 1 & 1 & 1^\circ & 2^\circ
\end{pmatrix*}
\end{align*}

\begin{align*}
\tilde{P}_1 = 
\vy{
\overline{1} \cr}
 \quad
P_1 =
\vy{
2
\cr} \quad
L_1 = \phi \quad
{\pi}_1 =
\begin{pmatrix*}[l]
2 & 2 & 3 & 3 & 4 & 4 & 4 & 4 \\
1^\circ & 2^\circ & 1 & \overline{1} & 1 & 1 & 1^\circ & 2^\circ
\end{pmatrix*}
\end{align*}

\begin{align*}
\tilde{P}_0 = \phi \quad
P_0 = \phi \quad
L_0 = \phi \quad
\pi={\pi}_0 =
\begin{pmatrix*}[l]
2 & 2 & 2 & 3 & 3 & 4 & 4 & 4 & 4 \\
\overline{1} & 1^\circ & 2^\circ & 1 & \overline{1} & 1 & 1 & 1^\circ & 2^\circ
\end{pmatrix*}
\end{align*}

\end{exa}

The final step is to map Burge two-line arrays to semistandard tableaux $Q_\beta $, where $\beta^t$ is even, which is achieved using a correspondence due to Burge \cite{burge}; this correspondence is also used in \cite{sundaram}.\\
\begin{enumerate}
	\item Set $Q_0 = \phi $.
	\item Iterate over the columns of $L$ from left to right, starting with the first column. For the $k^{th} $ column in $L$ with $i$ and $j$ as top and bottom entries respectively, to obtain $Q_k $ from $Q_{k-1} $
	 perform the following steps:
	\begin{enumerate}
		\item Insert $j$ into $Q_{k-1} $ using ordinary row insertion to get  an intermediate tableau $T = Q_{k-1} \leftarrow j $, which has one more box than $Q_{k-1}$.
		\item Add a new box containing $i$ to $T$  directly below the new box that was created by the row insertion process to obtain $Q_k$.
	\end{enumerate} 
	\item The image of $L$ is $Q_\beta =Q_s$, which is a tableau with even length columns.
\end{enumerate}

\begin{exa}

Let $$L = 
\left( \begin{array}{lllll}
 4 & 4 & 5 & 8 & 9\\
 1 & 3 & 2 & 5 & 4
 \end{array}\right).$$  We apply the algorithm to obtain $Q_5$, which is a tableau with even length columns.

$Q_1=\vy{1 \cr 4 \cr}$, \ $Q_2=\vy{1 & 3 \cr 4 & 4 \cr}$, \ $Q_3=\vy{1 & 2 \cr 3 & 4 \cr 4 \cr 5 \cr}$, \ $Q_4=\vy{1 & 2 & 5 \cr 3 & 4 & 8 \cr 4 \cr 5 \cr}$, \ $Q_5=\vy{1 & 2 & 4 \cr 3 & 4 & 5 \cr 4 & 8 \cr 5 & 9 \cr}$
 \end{exa}

\indent The inverse algorithm maps a semistandard tableau $Q_\beta$ of shape $\beta$ with even length columns and $2s$ boxes to a Burge two-line array $L$ via the procedure described below.
\begin{enumerate}
	\item Start by setting $L_s = \phi$ and $Q_s = Q_\beta $.
	\item To obtain $L_{k-1} $ and $Q_{k-1} $ from $L_{k} $ and $Q_{k} $, and starting with $k=n$, find the rightmost box in $Q_k$ that contains its largest entry $i$ and delete it. Perform reverse row bumping on the entry 
	immediately above the removed box to bump an entry $j$ out of the first row.  Let the remaining tableau be $Q_{k-1}$, which has two fewer boxes than $Q_k$ and add the column $\binom{i}{j}$ in $L_{k} $ to obtain $L_{k-1}$. 
	\item Continue to obtain $L = L_0 $.
\end{enumerate}

\begin{exa}
We perform the inverse algorithm on $Q_3=Q=\vy{ 2 & 3 \cr 3 & 6 \cr 4 \cr 6 \cr}.$

$Q_2=\vy{2 \cr 3 \cr 4 \cr 6 \cr}$, $L_2=\left( \begin{array}{c} 6 \cr 3 \cr \end{array} \right)$;  $Q_1=\vy{3 \cr 4 \cr}$, $L_1=\left(\begin{array}{cc} 6 & 6 \cr 2 & 3 \cr \end{array}\right)$;
$L=L_0=\left( \begin{array}{ccc} 4 & 6 & 6 \cr 3 & 2 & 3 \cr \end{array} \right)$

\end{exa}

\bigskip

Let $\overline{X}=\{x_1,x_1^{-1}, \ldots,x_m,x_{m}^{-1}\}$, $\mathcal{T}=\{t_1,\ldots,t_n\}$, $Z=\overline{X}\cup \mathcal{T}$ and $Y=\{y_1,\ldots,y_q\}$.  Given a two-line array $\pi \in \mathcal{A}$, define the weight of $\pi$ as 
$$\mbox{wt}(\pi)=\prod_{1 \leq i \leq q}\prod_{1\leq j \leq m} \prod_{1 \leq k \leq n} y_i^{\alpha_i}x_j^{\beta_j-\beta_{\overline{j}}}t_k^{\gamma_k},$$
where $\alpha_i$ denotes the number of entries $i$ in the top row of $\pi$, $\beta_j$ and $\beta_{\overline{j}}$ the number of entries $j$ and $\overline{j}$ respectively in the bottom row of $\pi$ and $\gamma_k$ the
number of entries equal to $k^\circ \in B_1$ in the bottom row of $\pi$.  For example, the array $\pi$ in Example \ref{firstmap}, has $\mbox{wt}(\pi)=y_2^3y_3^2y_4^4x_1t_1^2t_2^2$.

If a column ${i} \choose {j}$ appears $k$ times in $\pi$, where $j \in B_0$, it contributes either $y_i^kx_i^k$ or $y_i^kx_i^{-k}$ to $\mbox{wt}(\pi)$ and columns ${i} \choose {j}$ where $j \in B_1$ can occur only once in two-line arrays $\pi \in \mathcal{A}$.
With this in mind, the $spo$-correspondence between two-line arrays and triples $(\tilde{P}_\lambda, P_\lambda, Q_\beta)$ yields the following identity:

$$\sum_{\lambda} spo_\lambda(Z) s_\lambda(Y) \sum_{\beta^t even} s_{\beta}(Y) 
=\prod_{\substack{1 \leq i \leq n \\ 1 \leq j \leq q}} (1+t_iy_j) \ds \prod_{\substack{1 \leq i \leq m \\ 1 \leq j \leq q}} (1-x_iy_j)^{-1} (1-x_i^{-1}y_j)^{-1}.$$
The left-hand side of the equation enumerates the triples $(\tilde{P}_\lambda, P_\lambda,Q_{\beta})$ while the right-hand side enumerates the two-line arrays in $\mathcal{A}$.
The Littlewood identity (see \cite{littlewood}, \cite[Exercise 7.28]{stanley} or \cite[I.5.]{macdonald})  states that $\ds \prod_{1 \leq i < j \leq q} (1-y_iy_j)^{-1} = \sum_{\beta^t even} s_{\beta}(Y)$, so rearranging gives
 an orthosymplectic analogue of the Cauchy identity \cite[Theorem 4.24 (c)]{benkart}.
\begin{thm} We have
$$\sum_{\lambda} spo_\lambda(Z) s_\lambda(Y) =   \prod_{\substack{1 \leq i \leq n \\  1 \leq j \leq q}} (1+t_iy_j) \ds \prod_{\substack{1 \leq i \leq m \\ 1 \leq j \leq q}}(1-x_iy_j)^{-1} (1-x_i^{-1}y_j)^{-1} \prod_{1 \leq i < j \leq q} (1-y_iy_j).$$
\end{thm}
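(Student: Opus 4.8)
The plan is to read the identity off the weight-preserving bijection built in this section and then clear the Littlewood factor by a single algebraic rearrangement. First I would enumerate $\mathcal{A}$ directly from the definition of $\mathrm{wt}(\pi)$: a column $\binom{i}{k}$ with unbarred $k\in\{1,\dots,m\}$ may repeat arbitrarily and each copy contributes $x_ky_i$, so it yields a geometric factor $(1-x_ky_i)^{-1}$; a column $\binom{i}{\overline{k}}$ similarly yields $(1-x_k^{-1}y_i)^{-1}$; and a column $\binom{i}{k^\circ}$ with $k^\circ\in B_1$, which by the second defining property of $\mathcal{A}$ occurs at most once, yields $(1+t_ky_i)$. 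Multiplying over all $i$ and $k$ shows that $\sum_{\pi\in\mathcal{A}}\mathrm{wt}(\pi)$ equals the product $\prod_{i,k}(1+t_ky_i)\prod_{i,k}(1-x_ky_i)^{-1}(1-x_k^{-1}y_i)^{-1}$.

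Next I would verify that the composite correspondence $\pi\mapsto(\tilde{P}_\lambda,P_\lambda,Q_\beta)$ is weight preserving, namely $\mathrm{wt}(\pi)=\mathrm{wt}_Z(\tilde{P}_\lambda)\,\mathrm{wt}_Y(P_\lambda)\,\mathrm{wt}_Y(Q_\beta)$, where $\mathrm{wt}_Z$ is the $spo$-weight and $\mathrm{wt}_Y$ records $Y$-content. This splits into two conservation laws. For the $Z$-part I would check that each $spo$-insertion step multiplies the $spo$-weight of the growing tableau by the single-letter weight of the bottom entry $b_k$; at a cancellation the Berele step deletes an $\overline{i}$ against an $i$, whose weights $x_i^{-1}$ and $x_i$ cancel, so the net change is still exactly the weight of the inserted letter and hence $\mathrm{wt}_Z(\tilde{P}_\lambda)$ equals the bottom-row weight of $\pi$. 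For the $Y$-part I would track the top entries of $\pi$: at an addition a top entry $a_k$ is deposited in $P$, while at a cancellation $a_k$ is recorded in the top row of $L$ and the entry bumped from the first column of $P$ is recorded in the bottom row of $L$; in either case the product $\mathrm{wt}_Y(P)\,\mathrm{wt}_Y(L)$ gains exactly $y_{a_k}$. Since the Burge correspondence feeds both rows of each column of $L$ into $Q_\beta$, this gives $\mathrm{wt}_Y(P_\lambda)\,\mathrm{wt}_Y(Q_\beta)$ equal to the top-row weight of $\pi$.

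Summing the weight-preserving bijection over $\mathcal{A}$ and grouping the triples by the common shape $\lambda$ of $\tilde{P}_\lambda$ and $P_\lambda$ (with $Q_\beta$ ranging independently over semistandard tableaux whose conjugate shape is even), the three factors separate, using $spo_\lambda(Z)=\sum_{\tilde{P}}\mathrm{wt}_Z(\tilde{P})$ from \cite[Theorem 5.1]{benkart} and $s_\lambda(Y)=\sum_P\mathrm{wt}_Y(P)$, into $\left(\sum_\lambda spo_\lambda(Z)s_\lambda(Y)\right)\left(\sum_{\beta^t\,\mathrm{even}}s_\beta(Y)\right)$. Combined with the first step this is precisely the intermediate identity displayed above. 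Finally I would substitute the Littlewood identity $\sum_{\beta^t\,\mathrm{even}}s_\beta(Y)=\prod_{1\le i<j\le q}(1-y_iy_j)^{-1}$ on the left and multiply both sides by $\prod_{1\le i<j\le q}(1-y_iy_j)$ to isolate $\sum_\lambda spo_\lambda(Z)s_\lambda(Y)$ and reach the stated formula.

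I expect the main obstacle to be the weight-preservation verification, in particular confirming that the Berele cancellations are genuinely weight neutral on the $Z$-side and that the $y$-content is conserved across the interface between the two correspondences, since the bottom-row entries of $L$ arise as top entries of $\pi$ that were first deposited in $P$ and only later recycled. Everything past the intermediate identity is a routine rearrangement.
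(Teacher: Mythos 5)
Your proposal is correct and follows essentially the same route as the paper: enumerate $\mathcal{A}$ by the column-type generating product, invoke the weight-preserving $spo$-correspondence with triples $(\tilde{P}_\lambda,P_\lambda,Q_\beta)$ to get the intermediate identity, then apply the Littlewood identity and rearrange. The only difference is that you spell out the weight-conservation checks (the Berele cancellation being $Z$-weight neutral, and the $y$-content splitting between $P_\lambda$ and $L$) which the paper leaves implicit, and those checks are carried out correctly.
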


\bigskip

\noindent Specializing to $x_i=t_i=1$ yields the following Corollary.

\begin{cor} We have
$$\sum_{\lambda} f_{spo}^{\lambda} s_\lambda(Y)=\prod_{1 \leq j \leq q} (1+y_j)^n(1-y_j)^{-2m} \prod_{1 \leq i<j \leq q} (1-y_iy_k),$$
where $f^\lambda_{spo}$ denotes the number of $spo(2m,n)$-tableaux of shape $\lambda$.\end{cor}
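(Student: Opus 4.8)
The plan is to derive the Corollary as the specialization $x_i = t_i = 1$ (for all $i$) of the orthosymplectic Cauchy identity established in the preceding Theorem. The first point to record is that this identity is an identity of formal power series in $y_1,\dots,y_q$ whose coefficients are Laurent polynomials in the $x_i$ and polynomials in the $t_i$; consequently the substitution $x_i=t_i=1$ is legitimate on both sides, and no convergence or singularity issue arises. In particular, on the left $x_i^{-1}\mapsto 1$ causes no trouble, and on the right each factor $(1-y_j)^{-1}$ remains a well-defined power series.

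For the left-hand side I would unwind the definition $spo_\lambda(Z)=\sum_T \mathrm{wt}(T)$, where the sum runs over $spo$-tableaux $T$ of shape $\lambda$ and $\mathrm{wt}(T)$ is obtained by sending each entry $i\mapsto x_i$, each $\ov{i}\mapsto x_i^{-1}$, and each $i^\c\mapsto t_i$. Under $x_i=t_i=1$ every monomial $\mathrm{wt}(T)$ equals $1$, so $spo_\lambda(Z)$ collapses to the number of $spo$-tableaux of shape $\lambda$, namely $f_{spo}^\lambda$. Hence the left-hand side becomes $\sum_\lambda f_{spo}^\lambda\, s_\lambda(Y)$. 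Here I would note that $f_{spo}^\lambda$ is finite for each $\lambda$, and that, since $s_\lambda(Y)=0$ whenever $\ell(\lambda)>q$, the sum is well defined as a formal power series, being finite in each total degree in $Y$.

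For the right-hand side the specialization is a routine factor-by-factor evaluation: setting $t_i=1$ turns $\prod_{i,j}(1+t_iy_j)$ into $\prod_{1\le j\le q}(1+y_j)^n$; setting $x_i=1$ turns $\prod_{i,j}(1-x_iy_j)^{-1}(1-x_i^{-1}y_j)^{-1}$ into $\prod_{1\le j\le q}(1-y_j)^{-2m}$; and the factor $\prod_{1\le i<j\le q}(1-y_iy_j)$ is unaffected. Multiplying these together yields exactly the claimed right-hand side.

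There is no substantive obstacle here: the entire content lies in the Theorem, and the Corollary is a direct evaluation. The only care needed is the bookkeeping — verifying that the weight-to-one collapse on the left is precisely the tableau count $f_{spo}^\lambda$, and matching the exponents $n$ and $-2m$ on the right. The latter is simply because each index $i$ with $1\le i\le m$ contributes two identical factors, $(1-x_iy_j)^{-1}$ and $(1-x_i^{-1}y_j)^{-1}$, both of which become $(1-y_j)^{-1}$ upon setting $x_i=1$.
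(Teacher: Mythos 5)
Your proposal is correct and follows exactly the paper's route: the paper derives the corollary by the single remark ``Specializing to $x_i=t_i=1$'' applied to the preceding theorem, which is precisely your argument (with some additional, harmless bookkeeping about formal power series and the collapse of $spo_\lambda(Z)$ to $f^\lambda_{spo}$).
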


\bigskip

A sequence of partitions $\Lambda=(\emptyset=\lambda^0,\lambda^1, \ldots, \lambda^k=\lambda)$ is an {\em up-down tableau} of shape $\lambda$ and length $k$ if each $\lambda^i$ has one more box or one less box than $\lambda^{i-1}$ for $i=1, \ldots, k$.  An {\em up-down $(m,n)$-tableau} is an up-down tableau $\Lambda$ such that $\lambda_{m+1}^i \leq n$ for each $1 \leq i \leq k$.  An $m$-symplectic up-down tableau $\Lambda$ is an up-down $(m,n)$-tableau with $n=0$.  

The $spo$-insertion algorithm we defined in Section \ref{sec:jdt} can be adjusted to give a bijection between the set of words of length $k$ in the letters $B_0\cup B_1$ and pairs $(T,\Lambda)$ of $spo$-tableaux $T$ of shape $\lambda$, where $\lambda \vdash k-2r$  for some $r$ with $0 \leq r \leq \lfloor k/2 \rfloor$,
and up-down $(m,n)$-tableaux $\Lambda$ of shape $\lambda$ and
length $k$.  The bijection given in \cite{benkart} also yields Corollary \ref{benkupdown}.

Given a word $w=w_1 \ldots w_k$, where  $w_i \in B_0 \cup B_1$, let $P_0=\emptyset$ and $\lambda^0=\emptyset$.  For $1 \leq i \leq k$, let  $P_i=P_{i-1} \leftarrow w_i$ and $\lambda^i$ be the shape of $T_i$.  Then the word $w$ is mapped to $(T,\Lambda)$ where $T=P_k$ and $\Lambda=(\lambda^0,\lambda^1, \ldots, \lambda^k)$.
 We give an example of our bijection for completeness.

\begin{exa}  Let $w=1^\circ \ \ov{1}\  \ \ov{2}\  \ 2^\circ \ 1\  \  2\  \ \ov{1}\  \ 1$.
\begin{align*} P_0=\emptyset \quad \lambda^0=\emptyset; \quad P_1=1^\circ \quad \lambda^1=\vy{\cr} \end{align*}
\begin{align*} P_2=\vy{\ov{1} \cr 1^\circ \cr} \quad \lambda^2=\vy{ \cr \cr}; \quad P_3=\vy{\ov{1} & \ov{2} \cr 1^\circ \cr} \quad \lambda^3=\vy{ & \cr \cr}\end{align*}
\begin{align*} P_4=\vy{\ov{1} & \ov{2} & 2^\circ \cr 1^\circ \cr} \quad \lambda^4=\vy{ & & \cr \cr}; \quad P_5=\vy{\ov{2} & 2^\circ \cr 1^\circ \cr} \quad \lambda^5=\vy{ & \cr \cr} \end{align*}
\begin{align*} P_6=\vy{1 & 2^\circ \cr \ov{2} \cr 1^\circ \cr} \quad \lambda^6=\vy{ & \cr \cr \cr}; \quad
 P_7=\vy{\ov{1} & 2^\circ \cr 1^\circ \cr} \quad \vy{ & \cr \cr} \end{align*}
 \begin{align*} P_8=\vy{1^\circ & 2^\circ \cr} \quad \lambda^8=\vy{ & \cr} \end{align*}
 
 \noindent The word $w$ maps to $(P_8,\Lambda)$, where $\Lambda=(\lambda^0,\lambda^1, \ldots,\lambda^8).$

\end{exa}

\begin{cor}\label{benkupdown} \cite[Corollary 5.6]{benkart} Given positive integers $k,m$ and $n$, we have
$$(2m+n)^k=\sum_{r=0}^{\lfloor k/2 \rfloor} \sum_{\lambda \vdash k-2r }f_{spo}^{\lambda}f^\lambda_{ud}, $$
where $f^\lambda_{ud}$ is the number of up-down $(m,n)$-tableaux of shape $\lambda$ and length $k$.

\end{cor}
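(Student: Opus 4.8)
The plan is to realize both sides as cardinalities of explicitly described finite sets and to exhibit a bijection between them. The left-hand side $(2m+n)^k$ is exactly the number of words $w=w_1\cdots w_k$ of length $k$ in the alphabet $B_0\cup B_1$, since $|B_0\cup B_1|=2m+n$. The right-hand side counts pairs $(T,\Lambda)$ in which $T$ is an $spo$-tableau of some shape $\lambda$ and $\Lambda$ is an up-down $(m,n)$-tableau of shape $\lambda$ and length $k$: grouping by $\lambda$, there are $f^\lambda_{spo}$ choices for $T$ and $f^\lambda_{ud}$ choices for $\Lambda$, and if $\Lambda$ has $d$ down-steps then $|\lambda|=k-2d$, so setting $r=d$ recovers the outer sum over $0\leq r\leq \lfloor k/2\rfloor$. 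Thus it suffices to show that the map $w\mapsto (P_k,\Lambda)$ described above, where $P_i=P_{i-1}\leftarrow w_i$ and $\Lambda=(\lambda^0,\ldots,\lambda^k)$ with $\lambda^i=sh(P_i)$, is a bijection.

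To check that the forward map lands in the target set, I would first note that by Lemma \ref{firstlemma} each $P_i$ is an $spo$-tableau, and that the $spo$-insertion algorithm changes the number of boxes by exactly $\pm 1$: steps (1)--(4) add a single box, while the Berele cancellation in step (5) deletes one box via jeu de taquin. Hence consecutive shapes $\lambda^{i-1},\lambda^i$ differ by one box and $\Lambda$ is an up-down tableau. For the condition $\lambda^i_{m+1}\leq n$ I would use that each $\lambda^i$ is the shape of an $spo$-tableau: since the symplectic portion $S$ has at most $m$ rows, rows $m+1,m+2,\ldots$ contain only entries from $B_1$ and are strictly increasing across rows, so each such row has length at most $|B_1|=n$; in particular $\lambda^i_{m+1}\leq n$, and $\Lambda$ is an up-down $(m,n)$-tableau.

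For injectivity and surjectivity it is enough to invert the map one insertion at a time, from $i=k$ down to $i=1$, using the reverse operations already set up in Section \ref{sec:cauchy1}. Reading off $\Lambda$ tells us at each step whether $\lambda^i$ was obtained from $\lambda^{i-1}$ by adding a box (an up-step) or by deleting one (a down-step). For an up-step, the box of $\lambda^i/\lambda^{i-1}$ lies in an outer corner, and reverse row $spo$-insertion applied to its entry recovers both $P_{i-1}$ and the inserted letter $w_i$. For a down-step, the box of $\lambda^{i-1}/\lambda^i$ marks where jeu de taquin terminated; reversing the Berele step exactly as in part 2(b) of the inverse algorithm---add the empty box, slide it up the first column by reverse jeu de taquin, reinstate the cancelled barred entry and reverse-bump---recovers $P_{i-1}$ and $w_i$. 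Since each reverse step is deterministic and inverts the corresponding forward step, the two maps are mutually inverse.

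The main obstacle is verifying that the down-step reversal is well defined and genuinely inverts the cancellation step (5): one must check that reverse jeu de taquin returns the empty box to the unique row where a symplectic violation was created, that reinstating the appropriate entry $\overline{r}$ and reverse-bumping yields a valid letter $w_i\in B_0$, and that the reconstructed $P_{i-1}$ is the $spo$-tableau one started from. This is precisely the reversibility of Berele insertion adapted to the $spo$-setting, and it is the same argument underlying the inverse $spo$-correspondence established in Section \ref{sec:cauchy1}; once it is in hand, the bijection---and hence the identity---follows.
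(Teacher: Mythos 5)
Your proposal is correct and follows essentially the same route as the paper, which establishes the identity by counting words of length $k$ in $B_0\cup B_1$ on one side and pairs $(T,\Lambda)$ on the other, via the bijection $w\mapsto(P_k,\Lambda)$ built from iterated $spo$-insertion. The paper states this bijection only in outline (deferring the reversibility of the cancellation step to the Berele/Benkart--Shader--Ram machinery), and your write-up supplies the same verification steps---including the observation that the shape of an $spo$-tableau automatically satisfies $\lambda_{m+1}\leq n$---in more detail.
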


There is a bijection between two-line arrays $\pi \in \mathcal{A}$ with $k$ columns and the set of $q \times (2m+n)$ matrices $M_\pi$ with integer entries greater than or equal to zero, where the rightmost $q \times n$ portion of $M_\pi$ contains entries from $\{0,1\}$ and the sum of the
entries in $M_{\pi}$ is equal to $k$.    A two-line array $\pi \in \mathcal{A}$ is associated with a matrix $M_{\pi}$, where the rows of $M_{\pi}$ are labelled by the entries in $\{1,\ldots,q\}$  and the columns by the entries in $B_0 \cup B_1$ in ascending order
where the $ij$-th entry in $M_{\pi}$ is equal to the number of times the column ${i \choose j}$ appears in $\pi$.  The process can clearly be reversed.  Using this bijection to enumerate the two-line arrays $\pi \in \mathcal{A}$ with $k$ columns, coupled with our $spo$-correspondence, yields the following.

\begin{cor}Given a positive integer $k$,
$$\sum_{r=0}^k \sum_{\mu \vdash k-r} {nq \choose r} \frac{ {2mq \choose \ell(\mu)}\ell(\mu)!}{\prod_{i=1}^{k-r} \mbox{cont}(\mu,i)} = \sum_{\vert \lambda \vert +\vert \beta \vert =k} f^\lambda_{spo}d^\lambda d^{\beta}_{even},$$ 
where $d^\lambda$ is the number of semistandard tableaux of shape $\lambda$ with entries in $\{1,\ldots, q\}$, $d^{\beta}_{even}$ is the number of semistandard even tableaux of shape $\beta$, $\mu=(\mu_1,\mu_2,\ldots,\mu_t)$ and $\mbox{cont}(\mu,i)$ denotes the number parts $\mu_j$ such that $\mu_j=i$.\end{cor}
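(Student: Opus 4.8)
The plan is to show that both sides of the identity count the two-line arrays $\pi \in \mathcal{A}$ with exactly $k$ columns: the right-hand side through the $spo$-correspondence, and the left-hand side through the matrix bijection recorded above.

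For the right-hand side, I would first invoke the $spo$-correspondence followed by the Burge correspondence, which together biject arrays $\pi \in \mathcal{A}$ with triples $(\tilde P_\lambda, P_\lambda, Q_\beta)$ in which $\tilde P_\lambda$ is an $spo$-tableau of shape $\lambda$, $P_\lambda$ is a semistandard tableau of shape $\lambda$ with entries in $\{1,\ldots,q\}$, and $Q_\beta$ is a semistandard even tableau of shape $\beta$; for fixed shapes these are counted independently by $f^\lambda_{spo}$, $d^\lambda$, and $d^\beta_{even}$. The essential point is to check that an array with $k$ columns corresponds precisely to a triple with $|\lambda|+|\beta|=k$. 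Let $a$ be the number of columns producing an addition and $d$ the number producing a cancellation, so that $a+d=k$. Each addition adds one box to $\tilde P$ and each cancellation removes one, so $|\lambda|=a-d$, while the Burge array $L$ has $d$ columns, each of which contributes two boxes to $Q_\beta$ under the Burge correspondence, so $|\beta|=2d$. Hence $|\lambda|+|\beta|=(a-d)+2d=k$, and summing $f^\lambda_{spo} d^\lambda d^\beta_{even}$ over all $\lambda,\beta$ with $|\lambda|+|\beta|=k$ gives the right-hand side.

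For the left-hand side, I would count the same arrays directly using the bijection with $q\times(2m+n)$ matrices $M_\pi$ whose entries are nonnegative integers summing to $k$ and whose rightmost $q\times n$ block (indexed by $B_1$) has entries in $\{0,1\}$. Let $r$ be the sum of the entries in this rightmost block. That block is an arbitrary $0/1$ matrix with exactly $r$ ones, contributing $\binom{nq}{r}$ choices, and the complementary $q\times 2m$ block (indexed by $B_0$) is an arbitrary nonnegative integer matrix with entries summing to $k-r$. I would enumerate the latter by the partition $\mu\vdash k-r$ formed by its nonzero entries: for fixed $\mu$ one selects the $\ell(\mu)$ occupied cells among the $2mq$ available cells in $\binom{2mq}{\ell(\mu)}$ ways and then distributes the multiset of part-values among them in $\ell(\mu)!/\prod_{i\geq 1}\mbox{cont}(\mu,i)!$ ways. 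Summing over $\mu$, and then over $r$, reproduces the left-hand side exactly.

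Equating the two enumerations yields the corollary. The main obstacle is the bookkeeping in the first step: one must track carefully that every cancellation simultaneously deletes a box from $\lambda$ and, via the appended column of $L$ under the Burge correspondence, creates two boxes in $\beta$, so that the total column count $k$ is recovered as $|\lambda|+|\beta|$. Once this correspondence between the column count and the pair of shapes is established, the matrix enumeration in the second step is a routine partition-type count.
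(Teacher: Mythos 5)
Your proposal is correct and follows essentially the same route as the paper: the paper's proof consists precisely of the matrix count you give for the left-hand side, with the right-hand side handled by the $spo$- and Burge correspondences exactly as you describe (including the verification that a $k$-column array yields $\vert\lambda\vert+\vert\beta\vert=k$, which the paper leaves implicit). One small point: your multiset-placement count correctly carries $\prod_i \mbox{cont}(\mu,i)!$ in the denominator, whereas the stated formula (and the paper's own proof) writes $\prod_i \mbox{cont}(\mu,i)$ without the factorials---an apparent typo in the paper, so your version is the combinatorially correct one.
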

\begin{proof}
Given a two-line array with $k$ columns, if its associated matrix has $r$ entries equal to one in the rightmost $q \times n$ portion of the matrix then the entries in the $q \times 2m$ portion of the matrix must sum to $k-r$ so correspond to a partition $\mu$ of $k-r$.  For each such partition $\mu$ of $k-r$ there are 
$$\frac{2mq}{(2mq-\ell(\mu))!\prod_{i=1}^{k-r} cont(\mu,i)}=\frac{{2mq \choose \ell(\mu)} \ell(\mu)!}{\prod_{i=1}^{k-r} cont(\mu,i)}$$
such matrices.

\end{proof}

\section{Establishing the dual orthosymplectic Cauchy identity}\label{sec:dualortho}

In this section we give a bijective proof of the dual orthosymplectic Cauchy identity  \cite[Theorem 4.24]{benkart}.
\begin{thm}  \label{dualthm} We have
$$ \sum_{\lambda}spo_{\lambda} (Z)s_{\lambda^t}(Y)
= \prod_{\substack{1 \leq i \leq m \\ 1 \leq j \leq q}} (1+x_iy_j)(1+x_i^{-1}y_j) \prod_{\substack{1 \leq i \leq n \\ 1 \leq j \leq q}} (1-t_iy_j)^{-1}
\prod_{1 \leq i \leq j \leq q} (1-y_iy_j).$$

\end{thm}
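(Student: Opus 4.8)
The plan is to mirror the strategy used for the ordinary orthosymplectic Cauchy identity in Section \ref{sec:cauchy1}, but now building a \emph{dual} $spo$-correspondence based on the set $\mathcal{A}^*$ rather than $\mathcal{A}$, and using \emph{dual} row insertion together with \emph{dual} Burge arrays. First I would construct a bijection between arrays $\pi \in \mathcal{A}^*$ and triples $(\tilde{P}_\lambda, P_\lambda, L)$, where $\tilde{P}_\lambda$ is an $spo$-tableau, $P_\lambda$ is a tableau whose transpose is semistandard (so that $s_{\lambda^t}(Y)$ is the generating function for the $P_\lambda$), and $L$ is a dual Burge two-line array. Reading a column $\binom{i}{j}$ of $\pi$, one inserts $j$ into $\tilde{P}_{k-1}$ via the $spo$-insertion algorithm of Section \ref{sec:jdt}; when a box is added, one records $i$ in the matching position of $P_\lambda$ using dual row insertion, and when the Berele cancellation deletes a box, one performs the reverse bumping and appends a column to $L$. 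The defining inequalities of $\mathcal{A}^*$ (namely $a_i = a_{i+1} \Rightarrow b_i \geq b_{i+1}$, and $B_0$-columns occurring at most once) are exactly what is needed so that equal top entries in $\pi$ correspond to a \emph{weakly decreasing} sequence of insertions, forcing $P_\lambda$ to have weakly increasing rows but strictly increasing entries along the appropriate direction for $P_\lambda^t$ to be semistandard.

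The key structural lemmas carry over with the roles of strict and weak reversed. In place of Lemma \ref{strictlyrightlem} I would prove that if $x \geq x'$ (with the strict/weak convention exchanged between $B_0$ and $B_1$), then the box added by inserting $x'$ after $x$ lies in a strictly \emph{lower} row, guaranteeing that repeated insertions with a common top label produce a vertical strip and hence the conjugate-semistandard condition on $P_\lambda$. The analogues of Lemmas \ref{taquinpathlem} and \ref{initialstrip} governing the jeu de taquin cancellation paths transfer essentially verbatim, since the Berele cancellation mechanism is unchanged; these ensure that the recorded columns form a genuine dual Burge array, i.e. that the top row of $L$ is weakly increasing, that $a_i \geq b_i$, and that equal top entries give weakly decreasing bottom entries. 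Invertibility is established exactly as in the non-dual case by scanning for the largest recorded label and undoing either a dual-insertion box addition or a jeu de taquin cancellation, using the dual analogue of Lemma \ref{initialstrip} to recover the order of operations.

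The final step is to enumerate both sides by weight. On the generating-function side, a column $\binom{i}{j}$ with $j \in B_0$ now occurs at most once, contributing a factor $(1+x_iy_j)$ or $(1+x_i^{-1}y_j)$, while columns with $j \in B_1$ may repeat, contributing a geometric factor $(1-t_iy_j)^{-1}$; this accounts for the two infinite products over $B_0$ and $B_1$ on the right-hand side of Theorem \ref{dualthm}. The dual Burge arrays $L$ are then mapped bijectively to semistandard tableaux $Q_\beta$ of shape $\beta$, whose generating function $\sum_\beta s_\beta(Y)$ equals $\prod_{1 \leq i \leq j \leq q}(1-y_iy_j)^{-1}$ by the second Littlewood identity (the symmetric, rather than skew-symmetric, version). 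Rearranging this factor to the right-hand side produces the product $\prod_{1 \leq i \leq j \leq q}(1-y_iy_j)$, completing the identification of the two sides. The main obstacle I anticipate is verifying that dual row insertion of the top-row labels into $P_\lambda$ interacts correctly with the Berele cancellations in $\tilde{P}_\lambda$ so that reverse bumping recovers the first-column entry $a$ needed for the dual Burge column $\binom{i}{a}$; establishing the precise strict-versus-weak comparison in the dual analogue of Lemma \ref{strictlyrightlem}, and confirming it yields $a_i \geq b_i$ (rather than $a_i > b_i$) as required for dual Burge arrays, is the delicate point on which the whole correspondence rests.
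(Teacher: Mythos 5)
Your proposal follows essentially the same route as the paper: a dual $spo$-correspondence sending $\pi\in\mathcal{A}^*$ to triples $(\tilde P_\lambda, P_{\lambda^t}, L)$ with dual row insertion for the recording tableau, dual analogues of the bumping and jeu-de-taquin lemmas (the paper's Lemmas \ref{duallemma1} and \ref{jdtlem1}) with strict and weak inequalities exchanged, invertibility by peeling off the largest recorded label, and the even-row Littlewood identity applied to the dual Burge arrays. The only imprecision is that your generating function $\sum_\beta s_\beta(Y)$ must be restricted to partitions $\beta$ with all rows of even length (the shapes actually produced by the dual Burge correspondence) for the identity $\prod_{1\le i\le j\le q}(1-y_iy_j)^{-1}=\sum_{\beta\ \mathrm{even}} s_\beta(Y)$ to hold, which you implicitly acknowledge by invoking the symmetric Littlewood identity.
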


\bigskip

\noindent Using the dual Burge correspondence \cite[Theorem 5.4]{sundaram}, which states states that $$\prod_{1 \leq i\leq j \leq q} (1-y_iy_j)^{-1}=\sum_{\beta \ even} s_\beta(Y),$$   and rearranging the equation in 
Theorem \ref{dualthm}, we have

$$\displaystyle \sum_{\lambda}spo_{\lambda} (Z)s_{\lambda^t}(Y) \sum_{\beta \ even} s_{\beta}(Y)=\prod_{\substack{1 \leq i \leq m \\ 1 \leq j \leq q}}  (1+x_iy_j)(1+x_i^{-1}y_j) \prod_{\substack{1 \leq i \leq n \\ 1 \leq j \leq q}}
 (1-t_iy_j)^{-1}.$$

\bigskip

The left-hand side enumerates triples $(\tilde{P}_\lambda, P_{\lambda^t}, P_\beta)$, where $\tilde{P}_\lambda$ is an $spo$-tableau of shape $\lambda$, $P_{\lambda^t}$ is a semistandard tableau of shape $\lambda^t$ and $P_\beta$ is a semistandard tableau with even-length rows.  The right-hand side counts two-line arrays $\pi \in \mathcal{A^*}$.  To prove the identity, we will define a bijection between the two sets, which we will call the {\em dual spo-correspondence}.  We begin by defining a bijection from the set $\mathcal{A}^*$ to the set of triples $(\tilde{P}_\lambda, P_{\lambda^t}, L)$, where $L$ is a two-line dual Burge array.  Given $\pi \in \mathcal{A}^*$, perform the following steps:

\begin{enumerate}
  \item Let $\tilde{P}_0 = P_0 = L_0 = \emptyset $.
  \item For $k \geq 1$ and starting from the leftmost column of $\pi$, consider the $k^{th} $ column $\binom{i}{j} $ in $\pi $. Insert $j$ into $\tilde{P}_{k-1} $ via $spo$-insertion to obtain $\tilde{P}_k =\tilde{P}_{k-1}\leftarrow j$. Then $\tilde{P}_k$ has either one more box or one less box than $\tilde{P}_{k-1}$.  \begin{enumerate}
    \item If $\tilde{P}_k$  has one more box than $\tilde{P}_{k-1}$, 
    add a new box with entry $i$ to $P_{k-1}$ at the position where the extra box appears in $\tilde{P}_k $ to form  $P_k $, which has  the same shape as  $\tilde{P}_k $ and let $L_k = L_{k-1} $.
    \item If $\tilde{P}_k$  has one less box than $\tilde{P}_{k-1}$, consider the location of this extra box in $P_{k-1}$ and perform reverse dual row insertion:  bump the entry $x$ in that box into the row above it by replacing the largest entry $x^\prime$ in that row that is less than or equal to $x$.  Bump $x^\prime$ into the row above it using the same procedure and continue until an entry $a$ is bumped out of the first row.
     Let $P_k$ be the resulting tableau, which has the same shape as $\tilde{P}_k $ and add the column $\binom{i}{a} $ on the right of $L_{k-1} $ to obtain $L_k $.
  \end{enumerate}
  \item Let $\tilde{P}_\lambda = \tilde{P}_s $ and $P_{\lambda^t} = (P_s)^t $.  The array $L_s$ satisfies the property that whenever entries $i_k=i_{k+1}$ in its top row the corresponding bottom entries $b_k$ and $b_{k+1}$ satisfy
  $b_k \leq b_{k+1}$ (see proof of Theorem \ref{maindualthm}).  Sort the bottom row of $L_n$ so that whenever there are two entries $i_k=i_{k+1}$ in the top row, the corresponding entries $b_k$ and $b_{k+1}$ in the bottom row 
  satisfy $b_k \geq b_{k+1}$ and let $L$ be the resulting array.
\end{enumerate}

\begin{exa}
$$
\text{Let } \pi = 
\left(\begin{array}{llllllllllllllll}
1 & 1 & 1 & 2 & 2 & 2 & 3 & 3 & 3 & 3 & 3 & 4 & 4 & 4 & 4 & 4 \cr
 \overline{5} & \ov{3} & 1 & 3^\circ & 5 & \ov{4} & 1^\circ & 1 ^\circ & \ov{2} & 2 & \ov{1} & 4 & \ov{3} & 3 & \ov{2} & 1 \cr
\end{array}\right) $$
We omit the first  several steps.

\begin{align*}
\tilde{P}_9 = \vy{\ov{2} & \ov{4} & 1^\circ \cr 3 & 5 & 1^\circ \cr \ov{3} & 3^\circ \cr \ov{5} \cr}
 \qquad P_{9} =\vy{ 1 & 2 & 3 \cr 1 & 2 & 3 \cr 1 & 2 \cr 3 \cr} \qquad L_9=\emptyset
\end{align*}

\begin{align*}
\tilde{P}_{10} = 
\vy{2 & \ov{4} & 1^\circ \cr \ov{2} & 5 & 1^\circ \cr \ov{5} & 3^\circ \cr} 
\qquad P_{10} =
\vy{1 & 2 & 3 \cr 1 & 2 & 3 \cr 1 & 3 \cr}
\qquad
 L_{10} = \begin{pmatrix*}[l]
3\\
2
\end{pmatrix*}
\end{align*}

\begin{align*}
\tilde{P}_{11} = 
\vy{\ov{1} & \ov{4} & 1^\circ \cr 5 & 1^\circ \cr \ov{5} & 3^\circ \cr} 
\qquad P_{11} =
\vy{1 & 2 & 3 \cr 1 & 2 \cr 1 & 3 \cr}
\qquad
 L_{11} = \begin{pmatrix*}[l]
3 & 3 \cr
2 & 3 \cr
\end{pmatrix*}
\end{align*}

\begin{align*}
\tilde{P}_{12} = 
\vy{\ov{1} & 4 & 1^\circ \cr \ov{4} & 1^\circ \cr 5 & 3^\circ \cr \ov{5} \cr} 
\qquad P_{12} =
\vy{1 & 2 & 3 \cr 1 & 2 \cr 1 & 3 \cr 4 \cr}
\qquad
 L_{12} = \begin{pmatrix*}[l]
3 & 3 \cr
2 & 3 \cr
\end{pmatrix*}
\end{align*}

\begin{align*}
\tilde{P}_{13} = 
\vy{\ov{1} & \ov{3} & 1^\circ \cr 4 & 1^\circ \cr \ov{4} & 3^\circ \cr 5 \cr \ov{5} \cr} 
\qquad P_{13} =
\vy{1 & 2 & 3 \cr 1 & 2 \cr 1 & 3 \cr 4 \cr 4 \cr}
\qquad
 L_{13} = \begin{pmatrix*}[l]
3 & 3 \cr
2 & 3 \cr
\end{pmatrix*}
\end{align*}

\begin{align*}
\tilde{P}_{14} = 
\vy{\ov{1} & 3 & 1^\circ \cr \ov{3} & 1^\circ \cr 4 & 3^\circ \cr \ov{4} \cr} 
\qquad P_{14} =
\vy{1 & 2 & 3 \cr 1 & 3 \cr 1 & 4 \cr 4 \cr}
\qquad
 L_{14} = \begin{pmatrix*}[l]
3 & 3 & 4 \cr
2 & 3 & 2 \cr
\end{pmatrix*}
\end{align*}

\begin{align*}
\tilde{P}_{15} = 
\vy{\ov{1} & \ov{2} & 1^\circ \cr 3 & 1^\circ \cr \ov{3} & 3^\circ \cr} 
\qquad P_{15} =
\vy{1 & 2 & 3 \cr 1 & 4 \cr 1 & 4 \cr}
\qquad
 L_{15} = \begin{pmatrix*}[l]
3& 3 & 4 & 4  \cr
2 & 3 & 2 & 3 \cr
\end{pmatrix*}
\end{align*}

\begin{align*}
\tilde{P}_{16} = 
\vy{\ov{2} &  1^\circ \cr 3 & 1^\circ \cr \ov{3} & 3^\circ \cr} 
\qquad P_{16} =
\vy{1 & 2 \cr 1 & 4 \cr 1 & 4 \cr}
\qquad
 L_{16} = \begin{pmatrix*}[l]
3 & 3 & 4 & 4 & 4 \cr
2 & 3 & 2 & 3 & 3 \cr
\end{pmatrix*}
\end{align*}

The image of $\pi$ is $(\tilde{P}_{16}, P_{16}^t, L) $, where $L=\begin{pmatrix*}[l]
3 & 3 & 4 & 4 & 4 \cr
3 & 2 & 3 & 3 & 2 \cr
\end{pmatrix*}$.
\end{exa}

\bigskip

To map a dual Burge array to a semistandard tableau $Q_\beta$, where $\beta^t$ has rows of even length, we use the dual Burge correspondence \cite{burge}.  This is similar to the Burge correspondence described earlier, except that instead of performing regular row insertion in step 2. a), we perform dual row insertion.  The new box from step 2. b) is added directly to the right of the newly created box and not directly below it like previously.\\

\begin{exa}  Consider the dual Burge array $L=\begin{pmatrix*}[l]
3 & 3 & 4 & 4 & 4 \cr
3 & 2 & 3 & 3 & 2 \cr
\end{pmatrix*}.$  The dual Burge correspondence yields the tableau $S$, where $S^t=\vy{2 & 2 & 3 & 3 & 3 & 3 & 4 & 4 \cr 3 & 4 \cr}.$

\end{exa}

\bigskip

The following is a description of the inverse dual algorithm that maps a triple $(\tilde{P}_\lambda, P_{\lambda^t}, L) $ to a two-line array $\pi \in \mathcal{A}^* $, where $\tilde{P}_\lambda $ is an $spo$-tableau of shape $\lambda$, $P_{\lambda^t} $ is a semistandard tableau of shape $\lambda^t$ and $L$ is a dual Burge two-line array.\\

\begin{enumerate}
  \item Let $\tilde{P}_s = \tilde{P}_\lambda $ and $P_s = (P_{\lambda^t})^t $.  Rewrite $L_n$ so that when entries $i_k$ and $i_{k+1}$ in its top row satisfy $i_k=i_{k+1}$, the corresponding bottom entries satisfy $b_k \leq b_{k+1}$ and let this be $L_s$.  Let $\pi_s = \emptyset $, where $s$ is the total number of entries in $P_{\lambda^t} $ and $L$.
  \item To obtain $(\tilde{P}_{k-1}, P_{k-1}, L_{k-1} )$ and $\pi_{k-1} $ from $(\tilde{P}_{k}, P_{k}, L_{k} )$ and $\pi_{k} $, find the largest entry $i$ among all the entries in $P_k$ and the top row of $L_k$.
  \begin{enumerate}
    \item If $i$ belongs to $L_k$,  
   select the rightmost column $\binom{i}{a} $ with $a$ maximal and delete this column from $L_k$ to obtain $L_{k-1}$. Use dual row insertion to insert $a$ into $P_k$
     to obtain $P_{k-1}$, which has one more box than $P_k$. 
     The tableau $P_{k-1} $ has one more box than $\tilde{P}_k$; add an empty box to $\tilde{P}_k $ in this spot and perform reverse jeu de taquin to slide the empty box into the first column and into the highest possible row $r$, 
     without creating a symplectic violation. Place an $r$ in the empty box and then replace the right-most $r$ in that row with $\overline{r} $. (Note that there is at least one $r$ in the row $r$, since we placed an $r$ in the empty
     box.)  Then bump an entry $r$ into the row above and perform reverse row bumping until some entry $j$ is bumped out of the first row to give $P_{k-1} $. 
    Prepend column $\binom{i}{j} $ to $\pi_k $ to obtain $\pi_{k-1} $.
    \item If $i$ does not belong to $L_k$ then select the lowest entry $i$ in $P_k$ and delete that box to get $P_{k-1} $. Since $\tilde{P}_k $ and $P_k $ are of the same shape, $\tilde{P}_k $ and $P_{k-1} $
     differ by exactly one box.  Locate the extra box in this location in $\tilde{P}_k $ and perform reverse $spo$-insertion, starting with the entry in this box, to obtain $\tilde{P}_{k-1} $. This bumps out some entry $j$ from the first row. 
     Prepend column $\binom{i}{j} $ to $\pi_k $ to get $\pi_{k-1} $ and let $L_{k-1} = L_k$.
  \end{enumerate}
  \item Set $\pi = \pi_0 $.
\end{enumerate}

\begin{lem}\label{duallemma1} Let $T$ be an $spo$-tableau and $x,x^\prime \in B_0 \cup B_1$.  Suppose $x^\prime \geq x$ if $x,x^\prime \in B_1$ and $x^\prime > x$ otherwise.  If $T \leftarrow x^\prime$ has one more box $B^\prime$
than $T$ and $(T \leftarrow x^\prime)\leftarrow x$ has one more box $B$ than $T \leftarrow x^\prime$ then $B$ lies strictly below $B^\prime$.
\end{lem}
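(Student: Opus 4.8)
The plan is to mirror the proof of Lemma \ref{strictlyrightlem}, but with the order of insertion exchanged and the bumping routes tracked so as to force a \emph{downward} rather than rightward displacement. Since both insertions are assumed to add a box, no symplectic violation occurs, so inserting $x'$ and then $x$ behaves exactly like ordinary insertion: regular row insertion (bump the leftmost strictly larger entry) on entries of $B_0$, and dual row insertion (bump the leftmost weakly larger entry, so that equal entries of $B_1$ bump) on entries of $B_1$. I first dispose of the pairing $x \in B_1$, $x' \in B_0$, which would force $x > x'$ and so violate the hypothesis; the surviving possibilities are $x,x' \in B_0$ (then $x < x'$), $x,x' \in B_1$ (then $x \leq x'$), and $x \in B_0$, $x' \in B_1$ (then automatically $x < x'$).

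The key step is a downward induction on rows maintaining the invariant that, in each row, the entry bumped by the insertion of $x$ sits weakly to the left of the box bumped by the insertion of $x'$, and that the entry $y$ passed down by $x$ satisfies $y \leq y'$, where $y'$ is the entry passed down by $x'$, with the inequality strict unless $y,y' \in B_1$. For the first row I would check, case by case on whether $x$ and $x'$ lie in $B_0$ or $B_1$, that because $x \leq x'$ the entries eligible to be bumped by $x$ lie weakly to the left of the box bumped by $x'$. Comparing the two displaced entries then splits into the case where $x$ bumps strictly left of $x'$ (where the row ordering of an $spo$-tableau gives $y \leq y'$, strict whenever a $B_1$ entry is involved) and the case where $x$ bumps exactly the box now holding $x'$ (so $y = x'$ and $y'$ is what $x'$ originally displaced, giving $y' \geq x'$). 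In each instance the invariant is reproduced, so it carries to the next row.

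Finally I would read off the conclusion from how the two routes terminate. Suppose $x'$ terminates by appending its carried value $w$ at the end of some row $p'$, creating $B'$. The invariant forces the value $v$ carried by $x$ into row $p'$ to satisfy $v \leq w$, strict unless both lie in $B_1$; in either case $v$ cannot be appended in row $p'$ (if $v < w$ the larger entry $w$ blocks the append, and if $v = w \in B_1$ dual insertion bumps the equal entry), so $x$'s route continues strictly below row $p'$ and $B$ lands in a lower row than $B'$. The same comparison applied at any row $p < p'$ shows $x$ cannot terminate above $x'$: an append by $x$ at row $p$ would force $v_p \geq w_p$, contradicting the invariant unless $v_p = w_p \in B_1$, which again makes dual insertion bump rather than append.

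I expect the main obstacle to be the bookkeeping in the mixed case $x \in B_0$, $x' \in B_1$ together with the equality case $x = x' \in B_1$: it is exactly here that the hybrid nature of $spo$-insertion (regular insertion on $B_0$ versus dual insertion on $B_1$) intervenes, and one must confirm both that the \emph{strict-unless-both-in-$B_1$} clause of the invariant is preserved through a bump and that, at termination, the dual-insertion convention forces a bump rather than an append, which is what makes $B$ fall strictly below $B'$.
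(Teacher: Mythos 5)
Your proof is correct and follows essentially the same route as the paper's: compare the two bumping paths row by row, showing that the path of $x$ stays weakly left of the path of $x^\prime$ and carries a weakly smaller entry with the appropriate strictness clause, then observe that the path of $x$ cannot append a box at or above the row where the path of $x^\prime$ terminates. Your strictness clause (the two bumped entries can be equal only when both lie in $B_1$) is the correct one for propagating the induction; the paper's printed proof states the opposite clause (equality only when both lie in $B_0$), which appears to be a typo.
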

\begin{proof}
Suppose that $x^\prime$ bumps $y^\prime$ from row one of $T$ and that $x$ subsequently bumps $y$ from row one.  Then $y$ either belongs to the same box as $y^\prime$ or to a box to its left.  The two bumped entries $y$ and $y^\prime$ can only be equal if $y,y^\prime \in B_0$ and otherwise $y^\prime>y$ so the argument can also be applied to all rows below the first.  The bumping route for $x^\prime$ ends with the insertion of an entry $z^\prime$ in the box $B^\prime$ and the bumping route for $x$ then either bumps $z^\prime$ into the next row or bumps an entry to the left of $B^\prime$ into the next row, so the added box $B$ lies strictly below $B^\prime$.  \end{proof}

\begin{lem}\label{jdtlem1} Suppose that $x,x^\prime \in B_0$ with $x^\prime>x$.  If $spo$-insertion of  $x^\prime$ into $T$ causes a cancellation then insertion of $x$ into $T \leftarrow x^\prime$ also causes a cancellation.  Furthermore  the jeu de taquin path of the empty box caused by inserting $x^\prime$ ends in a lower row than that of the empty box caused by inserting $x$.

\end{lem}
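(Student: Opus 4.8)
The plan is to prove Lemma \ref{jdtlem1} by following the structure of Lemma \ref{taquinpathlem} and Lemma \ref{initialstrip}, which are the analogous statements for the ordinary (non-dual) $spo$-correspondence, but tracking the reversal of inequalities that occurs in the dual setting. The two assertions to establish are: (i) that the cancellation propagates, i.e. insertion of $x$ into $T \leftarrow x^\prime$ must also cancel; and (ii) that the jeu de taquin path for $x^\prime$ terminates strictly below that for $x$. Note that the ordering hypothesis $x^\prime > x$ here is the opposite of the one in Lemma \ref{taquinpathlem}, reflecting the fact that columns of $\pi \in \mathcal{A}^*$ with equal top entries have weakly decreasing bottom entries, so the larger letter $x^\prime$ is inserted first. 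This means I expect the resulting path inequality to be reversed as well, which is consistent with the statement.

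First I would set up the bumping comparison before any cancellation occurs. Since $x^\prime > x$ and both lie in $B_0$, standard row-insertion bumping behaviour applies while the routes remain additive: if $x^\prime$ bumps $y^\prime$ from row one of $T$ and $x$ subsequently bumps $y$ from row one of $T \leftarrow x^\prime$, then $y$ belongs to a box weakly left of $y^\prime$, and $y \leq y^\prime$ (with strict inequality forced when the entries are distinct). This is precisely the dual of the monotonicity used in Lemma \ref{strictlyrightlem}, and by induction on rows the bumping route of $x$ stays weakly left of that of $x^\prime$ in every row they both pass through. The key consequence is that whenever $x^\prime$ deposits a $B_0$-entry that triggers a symplectic violation in some row $i$ (an $\overline i$ bumped out of row $i$ by an $i$), the route of $x$ reaches that same row in a column weakly to the left, and by the ordering of entries it too forces an $i$ against an $\overline i$, producing a cancellation no later than that of $x^\prime$. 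This gives assertion (i): if $x^\prime$ cancels then so does $x$.

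The heart of the argument, and the step I expect to be the main obstacle, is comparing the two jeu de taquin paths in assertion (ii). After the Berele cancellation, an empty box is created in the first column and slid to an outer corner by forward slides. I would argue that the empty box for $x^\prime$ is created in a row weakly lower than, or equal to, the row in which the empty box for $x$ is created (since $x$'s bumping route was weakly left and therefore its violation occurs weakly higher), and then show that forward slides preserve this relationship so that $x^\prime$'s path ends strictly lower. The delicate point is that the two cancellations happen in \emph{different} tableaux ($T$ versus $T \leftarrow x^\prime$), so I must carefully track how the box added by inserting $x^\prime$ perturbs the tableau into which $x$ is inserted, ruling out the possibility that this perturbation lets $x$'s path slip below $x^\prime$'s. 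The cleanest route is to mirror the proof of \cite[Lemma 3.3]{sundaram} verbatim with the inequalities reversed, invoking the forward-slide rules from Section \ref{sec:jdt} for $B_0$-entries, and to remark that entries from $B_1$ play no role here since both $x$ and $x^\prime$ lie in $B_0$ and a cancellation only involves symplectic entries.
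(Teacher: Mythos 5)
Your overall framing---two assertions, inequalities reversed relative to Lemmas \ref{taquinpathlem} and \ref{initialstrip}, and the bumping route of $x$ lying weakly left of that of $x^\prime$---matches the paper, but the mechanism you give for assertion (i) does not work. You claim the route of $x$ ``reaches that same row'' $i$ and there ``forces an $i$ against an $\overline{i}$.'' However, the $\overline{i}$ that $x^\prime$ collided with has been \emph{deleted} from the tableau and the resulting hole slid away by jeu de taquin, so it is not present in $T\leftarrow x^\prime$ for $x$ to collide with; in general the cancellation caused by $x$ occurs in a \emph{strictly higher} row than row $i$, not at row $i$. You also never exclude the possibility that the bumping route of $x$ simply terminates by appending a box before any violation arises. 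The paper's argument does the real work differently: it locates the violating $i$ and $\overline{i}$ in the first column, deduces that every row $r\leq i-1$ of the first column of $T\leftarrow x^\prime$ contains $r$ or $\overline{r}$, and then uses the weakly-left route comparison to force the route of $x$ into the first column above row $i$, where bumping an entry $r$ or $\overline{r}$ out of row $r$ by an entry that must be at least $r$ is automatically a new violation. Some version of this first-column analysis is needed; ``weakly left, hence the same collision'' is not sufficient.

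For assertion (ii), your plan (empty box for $x^\prime$ created weakly lower, forward slides preserve this, mirror \cite{sundaram} with inequalities reversed) has the right shape but is not yet an argument: a weakly lower starting row plus order preservation yields only a weakly lower endpoint, whereas the lemma asserts a strict inequality, and the reversal of inequalities in the dual setting is precisely what must be checked rather than asserted---which is presumably why the paper writes this proof out instead of citing \cite{sundaram} as it does for Lemmas \ref{taquinpathlem} and \ref{initialstrip}. The paper's argument is a column-by-column comparison: if the first empty box descends from row $i$ to row $i+1$ at column $m$, it has already slid the entry $a_m$ in position $(i,m)$ into column $m-1$, and since $a_m\geq b_m$ (the entry directly above it, with equality only for $B_1$ entries), the second empty box, upon entering row $i$, must travel to column $m$ or beyond before it can descend; iterating over subsequent rows keeps the second path weakly right of the first at every row, from which the strict comparison of terminal rows follows. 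You would need to supply this comparison (or an equivalent) to close the proof.
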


\begin{proof}
Since insertion of $x^\prime$ causes a cancellation, it causes some $i \geq x^\prime$ in the first column of $T$ to bump an $\overline{i}$ in row $i$ of $T$ into row $i+1$.  Thus each row $r$ of the first column of $T \leftarrow x^\prime$ with $r \leq i-1$ contains either  $r$ or $\overline{r}$.   The bumping path of $x$  in the symplectic part of the  tableau lies weakly left and strictly above the bumping path of $x^\prime$ (see \cite{fulton}) so  insertion of $x$ causes an entry to bump out of one of the first $i-1$ rows of  the first column of $T \leftarrow x^\prime$ into a subsequent row, which causes a cancellation in the first column.

Suppose that the empty box caused by inserting $x^\prime$  enters row $i+1$ at column $m$ and let $a_m$ denote the entry in row $i$ of column $m$ of $T$ and $b_m$ the entry directly above it. Then this box slides $a_m$ into column $m-1$ of row $i$ and, if the subsequent box formed by inserting $x$ enters row $i$, it must slide to at least column $m$ or a column to the right of column $m$ before entering row $i+1$ since $a_m\geq b_m$ with possible equality if $a_m, b_m \in B_1$. The same argument applies to subsequent rows and the result now follows. \end{proof}

\begin{thm}\label{maindualthm} The triple $(\tilde{P}_\lambda, P_\lambda^t,L)$ consists of an $spo$-tableau $\tilde{P}_\lambda$, a semistandard tableau $P_\lambda^t$ and a dual Burge array $L$. \end{thm}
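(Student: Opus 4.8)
The plan is to verify the three claims of Theorem~\ref{maindualthm} separately, mirroring the structure of the proof of Theorem~\ref{bijectionthm} but using the dual lemmas in place of their insertion counterparts. First, Lemma~\ref{firstlemma} immediately gives that $\tilde{P}_\lambda$ is an $spo$-tableau, since at each step $\tilde{P}_k = \tilde{P}_{k-1} \leftarrow j$ is obtained by $spo$-insertion. Next I would show that $P_\lambda^t$ is semistandard, equivalently that $P_\lambda = P_s$ has weakly increasing rows and weakly increasing columns with the transpose-semistandard property. The reverse dual row insertion in step 2.(b) preserves the transpose-semistandard shape, and a box added in step 2.(a) lands at an outer corner because the matching box in $\tilde{P}_k$ does. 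Since the top row of $\pi$ is weakly increasing, both rows and columns of $P_k$ are weakly increasing. The key point requiring the dual lemma is strict increase along rows of $P_\lambda^t$ (i.e.\ strict increase down columns of $P_\lambda$): when $a_i = a_{i+1}$ in the top row of $\pi$, the convention for $\mathcal{A}^*$ forces $b_i \geq b_{i+1}$, with strict inequality if both lie in $B_0$, so Lemma~\ref{duallemma1} applies to guarantee that successive added boxes lie strictly below one another, yielding the required strictness.

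The third and most substantial part is to verify that $L$ is a dual Burge two-line array, namely that its top row is weakly increasing, that $a_i \geq b_i$ columnwise, and that $a_i = a_{i+1}$ forces $b_i \geq b_{i+1}$ after the sort in step~3. I would first argue about the unsorted array $L_n$ and establish the claim quoted in step~3 that equal top entries $i_k = i_{k+1}$ give bottom entries $b_k \leq b_{k+1}$; sorting then produces the dual Burge condition $b_k \geq b_{k+1}$. The top row of $L_n$ is weakly increasing by construction, since cancellations are recorded left to right and the top entries come from the weakly increasing top row of $\pi$. For the inequality $j_k \leq i_k$ (here writing $\binom{i_k}{j_k}$ for the columns of $L_n$), the bumped entry $j_k$ comes from $P_t$ and so was recorded by a top entry of $\pi$ lying weakly to the left of the cancellation column, giving $j_k \leq i_k$; this is the analogue of the corresponding argument in Theorem~\ref{bijectionthm}, now adapted to dual row insertion. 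The dual Burge array permits $a_i \geq b_i$ rather than the strict $a_i > b_i$ of the ordinary Burge case, which is exactly why equality $j_k = i_k$ is allowed and no further strictness argument (of the Lemma~\ref{initialstrip} type) is needed here.

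The main obstacle, and the heart of the argument, is establishing the claim in step~3 that whenever $i_k = i_{k+1}$ the unsorted bottom entries satisfy $b_k \leq b_{k+1}$. This is the dual analogue of the final paragraph of the proof of Theorem~\ref{bijectionthm}, and it is precisely where Lemma~\ref{jdtlem1} does the work. When $i_k = i_{k+1}$, the corresponding bottom entries $b, b'$ of $\pi$ satisfy $b \geq b'$ (by the $\mathcal{A}^*$ ordering on the bottom row for equal top entries), and both insertions cause cancellations. Applying Lemma~\ref{jdtlem1} with the roles of $x, x'$ assigned according to this ordering, the jeu de taquin path of the earlier insertion ends in a lower row than that of the later insertion. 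I would then transfer this to the reverse dual row bumping routes in $P_{t+1}$ that eject $b_k$ and $b_{k+1}$ out of the first row: the bumping path associated with the deeper jeu de taquin landing begins in a weakly lower row, and by the monotonicity property of dual row insertion the entry bumped out of the first row is no larger, giving $b_k \leq b_{k+1}$. I expect the delicate step to be matching up the jeu de taquin landing rows with the starting rows of the reverse dual bumping paths and confirming the correct direction of the inequality under dual (rather than ordinary) insertion; the remaining verifications are routine adaptations of the symplectic arguments in \cite{sundaram}.
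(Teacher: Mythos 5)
Your proposal takes essentially the same route as the paper's proof: Lemma \ref{duallemma1} to get the strictness in $P_\lambda$ from the $\mathcal{A}^*$ ordering on equal top entries, the Theorem \ref{bijectionthm} argument for the top row of $L$ and for $i_k \geq j_k$ (correctly noting that no Lemma \ref{initialstrip}-type strictness is needed here), and Lemma \ref{jdtlem1} plus the comparison of reverse row-bumping paths to obtain $j_k \leq j_{k+1}$ before the final sort. The only slip is terminological: the strictness required for $P_{\lambda^t}$ to be semistandard is down its columns, i.e.\ along the rows of $P_\lambda$ (not "along the rows of $P_{\lambda^t}$" as you write), but the mechanism you invoke --- successive added boxes with equal labels lying in strictly lower rows --- establishes exactly the correct condition.
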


\begin{proof}  Since the entries in the top row of $\pi\in \mathcal{A}^*$ are weakly increasing and boxes added at any step to $P_{k-1}$ to obtain $P_k$ are in outer corners, the entries in the rows and columns of any $P_k$ in the algorithm are weakly increasing.  
For entries $a_i=a_{i+1}$ in the top row of $\pi \in \mathcal{A}^*$, we have $b_i \geq b_{i+1}$ and Lemma \ref{duallemma1} guarantees that a box added by inserting $b_i$ into $\tilde{P}_{i-1}$ is strictly above a box added by inserting $b_{i+1}$ into
$\tilde{P}_i$, so the rows of any $P_k$ obtained in the algorithm are strictly increasing, and thus $P_\lambda^t$ is semistandard.

If $L_n=\begin{pmatrix*}[l] i_1 & i_2 & \cdots & i_r \cr j_1 & j_2 & \cdots & j_r \cr \end{pmatrix*},$  the justification that $i_k \leq i_{k+1}$ and that $i_k \geq j_k$ is the same as in the proof of Theorem \ref{bijectionthm}.  If $i_k=i_{k+1}$, then the entries $b^\prime$ and $b$ below $i_k$ and $i_{k+1}$ in $\pi$ respectively satisfy $b^\prime > b$ and both cause cancellations.  By Lemma \ref{jdtlem1}  the taquin path of the empty box created by inserting $b^\prime$ ends in a lower row  than the taquin path of the empty box created by subsequently inserting $b$.  It follows that the corresponding reverse row bumping path in $P_{k}$, which bumps $j_{k}$ out of the first row, begins in a lower row than the bumping path that bumps $j_{k+1}$ out of the first row of $P_{k+1}$ and, thus,  $j_k \leq j_{k+1}$. After rearranging, the array $L$ is a dual Burge array.  \end{proof}

\noindent {\bf Acknowledgement.}  The authors wish to thank the anonymous referees who carefully read the paper and provided useful suggestions.

\begin{bibdiv}

\begin{biblist}

\bib{benkart}{article}{author={G. Benkart}, author={C. Shader}, author={A. Ram},
 title={Tensor product representations for orthosymplectic Lie superalgebras},
 journal={J. Pure Appl. Algebra}, volume={130}, year={1998}, pages={1--48}}
 
\bib{berele}{article}{author={A. Berele}, title={A Schensted-type correspondence for the symplectic group}, journal={J. Combin. Theory Ser. A}, volume={43}, year= {1986}, pages={320--328}}

\bib{burge}{article}{author={W.H. Burge}, title={Four correspondences between graphs and generalized Young tableaux}, journal={J. Combin. Theory Ser. A} volume={17}, year={1974}, pages={12--30}}

\bib{fulton}{book}{author={W. Fulton}, 
 title={Young tableaux},
 series={London Mathematical Society Student Texts}, publisher={Cambridge University Press}, place={Cambridge}, year={1997}
}

\bib{knuth}{article}{author={D.E. Knuth}, title={Permutations, matrices and generalized Young tableaux}, journal={Pacific J. Math.}, volume={34}, year={1970}, pages={709--727}}

\bib{littlewood}{article}{author={D. Littlewood}, author={A.R. Richardson}, title={Group characters and algebra}, journal={Philos. Trans. R. Soc. Lond. Ser. A}, volume={233}, year={1934}, pages={99--141}}

\bib{macdonald}{book}{author={I. G. Macdonald}, title={Symmetric functions and Hall polynomials, second edition}, publisher={Oxford University Press}, year={1995}}

\bib{sagan}{book}{author={B. E. Sagan}, title={The symmetric group: Representations, combinatorial algorithms, and symmetric functions},  series={Graduate Texts in Mathematics}, edition={Second edition},publisher={Springer-Verlag}, year={2001}}

\bib{schensted}{article}{author={C. Schensted}, title={Longest increasing and decreasing subsequences}, journal={Canad. J. Math.}, volume={13}, year={1961}, pages={179--191}}


\bib{stanley}{book}{author={R.P. Stanley}, title={Enumerative Combinatorics}, volume={2}, place={New York}, publisher={Cambridge University Press}, pages={316Ð380}, year={1999}}

\bib{stokkeosp}{article}{author={A. Stokke},  title={An orthosymplectic Pieri rule}, journal={Electron. J. Comb.}, year={2018}, volume={25(3)}}
\bib{stokkevisentin}{article}{author={A. Stokke}, author={T. Visentin}, title={Lattice path constructions for orthosymplectic determinantal formulas}, journal={European J. Combin.}, volume={58}, year={2016}, pages={38--51}}
 
 \bib{sundaram}{article}{author={S. Sundaram}, title={The Cauchy identity for $Sp(2n)$}, journal={J. Combin. Theory Ser. A}, volume={53}, year= {1990}, pages={209--238}}

\end{biblist}

\end{bibdiv}

\end{document}